\newtheorem{theorem}{Theorem}[section]
\newtheorem{lemma}{Lemma}[section]
\newtheorem{remark}{Remark}[section]
\newcommand{\bld}[1]{\hbox{\boldmath$#1$}}    
\newcommand{\Th}{\mathcal{T}_h}
\newcommand{\pol}{\mathcal{P}}
\newcommand{\Eh}{\mathcal{E}_h}
\newcommand{\tng}{\mathsf{tng}}
\newcommand{\nrm}{\mathsf{nrm}}
\begin{document}
\title[HDG-FSI]{Arbitrary Lagrangian-Eulerian hybridizable 
discontinuous Galerkin methods for fluid-structure interaction}
\author{Guosheng Fu}
\address{Department of Applied and Computational Mathematics and 
Statistics, University of Notre Dame, USA.}
\email{gfu@nd.edu}
 \thanks{We acknowledge the partial support of this work
 from U.S. National Science Foundation through grant DMS-2012031.}
%\author{Wenzheng Kuang}
%\address{Department of Applied and Computational Mathematics and 
%Statistics, University of Notre Dame, USA.}
%\email{wkuang1@nd.edu}

\keywords{Divergence-free HDG, ALE, FSI, $H(\mathrm{curl})$-conforming HDG}
\subjclass{65N30, 65N12, 76S05, 76D07}
\begin{abstract}
  We present a novel (high-order) 
  hybridizable discontinuous Galerkin (HDG) scheme 
  for the fluid-structure interaction (FSI) problem.
The (moving domain) incompressible Navier-Stokes equations are discretized using
a divergence-free HDG scheme within the arbitrary Lagrangian-Euler (ALE)
framework. 
The nonlinear elasticity equations are discretized using a novel 
HDG scheme with an $H(\mathrm{curl})$-conforming velocity/displacement
approximation.
We further use a combination of the Nitsche's method (for the tangential component) 
and the  mortar method (for the normal component) to enforce  
the interface conditions on the fluid/structure interface. 
%These interface conditions are treated monolithically, and 
A second-order backward difference formula (BDF2) is use for the temporal discretization. 
Numerical results on the classical benchmark problem by Turek and Hron
\cite{Turek06} show a good performance of our proposed method.
%Newton's
%method is used to solve the coupled nonlinear system in each time step.

%The main advantages of the methods are:
%For the fluid sub-problem, the $H(\mathrm{div})$-conforming HDG scheme produces an exactly divergence-free fluid
%velocity approximation, which incorporates the classical upwinding treatment of
%nonlinear convection and can be solved efficiently via static condensation.
%For the structure sub-problem, the $H(\mathrm{curl})$-conforming HDG
%scheme is well-suited for thin or thick structures in both the compressible or the nearly
%incompressible regimes as it is numerically observed to be free from volume or shear locking.

\end{abstract}
\maketitle

\section{Introduction}
\label{sec:intro}
Fluid-structure interaction (FSI) describes a multi-physics phenomenon that
involves the highly non-linear coupling between a deformable or moving structure
and a surrounding or internal fluid. There has been intensive interest in
numerically solving FSI problems due to its wide applications in biomedical, engineering and
architecture fields.
%such as the simulation of blood-cell interactions, the study of wing fluttering in aerodynamics and the design of dams with reservoirs. However, it is generally difficult to achieve analytical solution to FSI problem with its nonlinear and multi-physics nature. Instead, there have been extensive studies in its numerical solutions and an increasing demand for more efficient and accurate numerical schemes 
\cite{bungartz2006fluid,chakrabarti2005numerical,dowell2001modeling,hou2012numerical,richter2017fluid}.

In this contribution, we present a novel hybridizable discontinuous Galerkin
(HDG) scheme to solve the 
nonlinear FSI problem modeled by the
incompressible Navier-Stokes equations in the fluid domain and the equations for
nonlinear hyperelasticity in the structure domain.
The fluid problem is discretized on the moving domain with an arbitrary
Lagrangian-Eulerian divergence-free HDG scheme, which is similar to the 
ALE $H(\mathrm{div})$-conforming HDG method
of Neunteufel and Sch\"oberl  \cite{Neunteufel21}, and the structure problem is discretized using
a novel $H(\mathrm{curl})$-conforming HDG scheme, which is closely related to
the TDNNS method of Pechstein and Sch\"oberl \cite{PechsteinSchoberl11}. 
The interface coupling terms are treated naturally via a combination of the
Nitsche's technique in the tangential component and the mortar method (via
a Lagrange multiplier) in the normal component.
We then apply a second order backward difference formula  (BDF2) for the temporal
discretization.
One salient feature of our method is that the fluid velocity approximation
maintains to be exactly 
divergence-free throughout.

Let us make two comments on our choice of the HDG scheme for on the 
first order system formulation, denoted as {\sf L-HDG} since is it based on a
local discontinuous Galerkin formulation, 
over another popular formulation for on the original second order equations, 
denoted as {\it IP-HDG} since is is based on an interior penalty DG formulation.
\begin{itemize}
  \item [(1)]
    {\it  {\sf L-HDG} has an advantage over {\sf IP-HDG}  in terms of the
    choice of the  stabilization parameter.}
In the proposed first order system formulation,
the semidiscrete stability is ensured as long as the HDG stabilization parameters are
positive. We take the stabilization parameter $\alpha$ to be of order {\it one}, in
particular, we use $\alpha=2\mu^f$ in the fluid solver and the Nitsche coupling
term, and 
$\alpha=2\mu^s$ in the structure solver, where $\mu^f$ is the dynamic viscosity
and $\mu^s$ is the Lam\'e's second parameter. 
Note that the original $H(\mathrm{div})$-conforming HDG method \cite{LehrenfeldSchoberl16}
and the Nitsche's technique is based on a second order PDE and interior penalty, where one has to use
a {\it sufficiently large} stabilization/penalty parameter that depends on both the mesh size and the polynomial
degree. While it is easy to estimate the minimal stabilization parameter that
guarantees the convergence of the HDG scheme \cite{LehrenfeldSchoberl16} for the
{\it linear} Stokes operator, such an estimation is expected to be very hard for
the nonlinear elasticity operator, which depends on the maximum
eigenvalue of a nonlinear fourth order elasticity tensor.
\item [(2)]
    {\it For incompressible Navier-Stokes, 
      {\sf L-HDG} usually poses a superconvergence property (via postprocessing)
      %when equal-order finite element spaces for all the variables, and order one stabilization are
      %used i
      in the diffusion dominated regime, see, e.g., \cite{CFQ18,FJQ19,CF18, TNBP19},  and provide optimal convergence in the convection
    dominated regime.} However, this superconvergence property is lost for the
    original {\sf IP-HDG} scheme.
Although the concept of {\it projected jumps}
\cite{Lehrenfeld10,LehrenfeldSchoberl16} can be used for 
{\sf IP-HDG} to restore
superconvergence for the Stokes problems, where polynomials of a lower degree is
used for the {\it hybrid} facet unknowns.
%than the volume unknowns. 
This technique might lead to accuracy loss for an (implicit) HDG discretization of the
Navier-Stokes problem in the convection-dominated regime \cite{Lehrenfeld10}.
\end{itemize}
We note that superconvergent postprocessing is not exploited in this work.

The rest of the paper is organized as follows.
In Section \ref{sec:fluid}, we introduce the ALE-divergence-free HDG scheme for 
the moving domain incompressible Navier-Stokes equations.
In Section \ref{sec:elas}, we introduce the $H(\mathrm{curl})$-conforming HDG scheme for 
the equations for nonlinear elasticity.
We then introduce an HDG method to solve the FSI problem by combing the above
two HDG schemes in Section \ref{sec:ale}.
Numerical results on a classical benchmark problem are presented in Section \ref{sec:num}.
We conclude in Section \ref{sec:conclude}.

%%%%%%%%%%%%%%%%%%%%%%%%%%%%%%%%%%%%%%%%%%%%
\section{The ALE-divergence-free HDG scheme for 
incompressible Navier-Stokes}
\label{sec:fluid}
In this section, we introduce the ALE-divergence-free HDG scheme 
for the incompressible Navier-Stokes equations.
We largely follow the work \cite{Fu20,Neunteufel21} to derive the method.

\subsection{The ALE-Navier-Stokes equations}
Consider the Navier-Stokes {equations}
on a moving domain $\Omega^f_t\subset\mathbb{R}^d$, 
$d\in\{2,3\}$, for $t\in [0, T]$, 
given by a continuous ALE map \cite{Nobile01,Quaini08}: 
\begin{align}
  \label{ale}
  \bld \phi_t:\Omega_0^f\subset \mathbb{R}^d\longrightarrow \Omega^f_t, 
  \quad\quad \mathbf x(\mathbf x_0, t) = \bld \phi_t(\mathbf x_0),\quad  
  \forall t\in [0,T],
\end{align}
where $\Omega^f_0$ is the initial (bounded) fluid domain at time $t=0$ with possibly
curved boundaries.
In this section we assume that the ALE map $\bld \phi_t$ in \eqref{ale}
is {\it apriori} given to simplify the presentation.

The Navier-Stokes equations in ALE non-conservative form 
\cite{Nobile01,Quaini08} is given as follows: 
\begin{subequations}
\label{ns-eq-ale}
  \begin{alignat}{2}
\label{ns-eq-ale1}
  \rho^f\left.\frac{\partial \bld u^f}{\partial t}\right|_{\mathbf x_0} 
  +\rho^f(\bld u^f-\bld \omega^f)\cdot\nabla_{\mathbf x}\bld u^f
  -\mathrm{div}_{\mathbf x}(2\mu\mathbf{D_x}(\bld u^f)  -p^f\bld {I})
    =&\; \rho^f\bld f^f, \quad&& \text{in} \;\Omega_t^f\times [0,T]\\
\label{ns-eq-ale2}
    \mathrm{div}_{\mathbf x}\bld u^f = &\;0,\quad     
                                       && \text{in} \;\Omega_t^f\times [0,T]
    %\quad in \;\;\Omega,
\end{alignat}
\end{subequations}
where $\mathbf D_{\mathbf x}$ is the symmetric strain rate tensor
\begin{align*}
  \mathbf {D_x}(\bld u) = \frac12(\nabla_{\mathbf x} \bld u + 
  (\nabla_{\mathbf x} \bld u)^T),
\end{align*}
$\bld I$ is the identity tensor, 
$\bld u^f(\mathbf x,t)$ is the fluid velocity field, $p^f(\mathbf x, t)$ is 
the pressure, 
$\rho^f$ is the (constant) fluid density, $\mu^f$ is the (constant) coefficient of 
dynamic viscosity, $\bld f^f$ is the body forces, and
\begin{align}
  \label{domain-vel}
  \bld \omega^f(\mathbf x, t) =
  \left.\frac{\partial \mathbf x}{\partial
  t}\right|_{\mathbf x_0} = 
  \frac{\partial \bld \phi_t}{\partial
  t}\circ\bld \phi_t^{-1}(\mathbf x) 
\end{align}
is the domain velocity.
We assume the Navier-Stokes equations \eqref{ns-eq-ale} is further equipped with
the following homogeneous Dirichlet boundary condition:
\begin{align}
  \label{dirichlet}
  \bld u^f(\mathbf x,t) = 0, \quad  \forall \mathbf x\in \partial\Omega^t, \quad t\in[0,T].
\end{align}

\subsection{Mesh and finite element spaces}
\subsubsection{Mesh and mappings}
Let $\Th^{f,0}:=\{K^0\}$ be a conforming simplicial
triangulation\footnote{
  The restriction to simplicial meshes is not essential, which
  is only for the purpose of simple presentation of the
  finite element spaces. 
  One can easily work with conforming hybrid meshes that consist of triangles/quadrilaterals in two dimensions, or 
tetrahedra/prisms/pyramids/hexahedra in three dimensions.
See, e.g., \cite{Zaglmayr06,FuentesKeithDemkowiczNagaraj15} for various high-order finite element spaces 
on hybrid meshes.
} of the
initial
fluid domain $\Omega^f_0\subset \mathbb{R}^d$, 
where the element $K^0=\Phi_K^0(\widehat K)$ is a mapped
simplex from the reference simplex element 
\[\widehat K:=\{\widehat{\mathbf x}=(\widehat x_1, \cdots, \widehat x_d):
  \quad  \widehat x_i \ge0,\;\forall 1\le i\le d,\quad
    \sum_{i=1}^d\widehat x_i \le 1.
\}
\]
We assume the mapping 
%$\Phi^0_K: \widehat K\rightarrow K^0$ is 
%a polynomial of degree $m\ge 1$, i.e.,  
$\Phi^0_K\in [\pol^m(\widehat K)]^d$, 
where $\pol^m(\widehat K)$ is the space of polynomials of degree at most $m\ge 1$
on the reference element $\widehat K$.
%We only deal with {\it conforming} meshes in the sense that the intersection
%between two elements is a common face, edge or vertex. Meshes with hanging nodes is out of the scope of this work.
Moreover, let $\Th^{f,t}:=\{K^t=\bld \phi_t(K^0)\}$ be the mapped triangulation of the
deformed domain $\Omega_t^f$ at time $t$, where $\bld \phi_t$ is the ALE map
given in \eqref{ale}. 
Denoting the composite mapping $\Phi_K^t=\bld\phi_t\circ\Phi_K^0: 
\widehat{K}
\rightarrow K^t$, we have $K^t=\Phi^t_K(\widehat K)$. 
We assume the mesh $\Th^{f,t}$
is regular in the sense that no elements with a degenerated or negative Jacobian determinant
exist. Without loss of generality, we further assume the ALE map \eqref{ale}
is  a continuous piecewise polynomial of degree $m$: 
\begin{align}
  \label{ale-pm}
  \bld \phi_t\in \bld S_h^m:=\{\bld v \in [H^1(\Omega^{f}_0)]^d:\quad 
    \bld v\circ \Phi_K^0(\widehat{\mathbf x})\in 
    [\pol^m(\widehat K)]^d, \quad \forall K^0\in \Th^{f,0}
  \}.
\end{align}

%The Jacobian matrix of the mapping $\Phi_i^0$ with respect to the reference
%coordinates and its determinant are denoted by 
%\[
%  F_i^0(\widehat{\mathbf x}) =\frac{\partial \Phi_i^0(\widehat{\mathbf x})}{\partial \widehat
%{\mathbf x}}, \quad 
%J_i^0(\widehat{\mathbf x}) = \mathrm{det}(F_i^0(\widehat{\mathbf x})), \quad 
%\forall i\in \mathcal{I}.
%\]

For the reference $d$ dimensional simplex element $\widehat K$, we denote
$\partial \widehat K$ as its boundary, which consists of $(d+1)$ 
boundary facet $\{\widehat E_{l}:=\widehat \Psi_l(\widehat E)
\}_{l=1}^{d+1}$, where $\widehat \Psi_l$ is the affine mapping
from the reference $(d-1)$ dimensional simplex element $\widehat E$
to the boundary facet $\widehat E_{l}$, where
\[
  \widehat E:=\{\widehat{\mathbf z}=(\widehat z_1,\cdots, \widehat z_{d-1}):\;\;
  \quad  \widehat z_i \ge0,\;\forall 1\le i\le d-1,\quad
  \sum_{i=1}^{d-1}\widehat z_i \le 1.
  \}
\] 
%To fix notation,
%we denote $\widehat K\subset \mathbb{R}^d$ as the {\it reference volume element}, and 
%$\widehat E\subset\mathbb{R}^{d-1}$ as the {\it reference surface element}.

We need the following 
Jacobian matrices and their determinants:
\begin{subequations}
  \label{jac}
  \begin{align}
  F_K^t :=& \nabla_{\widehat {\mathbf x}}\Phi_K^t\in\mathbb{R}^{d\times d}, \quad 
  J_K^t := \mathrm{det}(F_K^t),\\
  {F^t_0} :=& \nabla_{\mathbf x_0}\bld \phi_t\in\mathbb{R}^{d\times d}, \quad 
  {J^t_0} := \mathrm{det}({F}^t_0),\\
    \widehat F_l:=&\nabla_{\widehat{\mathbf z}}\widehat\Psi_l
    \in\mathbb{R}^{d\times(d-1)},\quad 
%  \widehat F_l^{-1}:=
%  \left((\widehat F_l)^T\widehat F_l\right)^{-1}\widehat F_l\in\mathbb{R}^{(d-1)\times
%  d},
%  \quad
  \widehat{J}_l := \sqrt{\mathrm{det}\left((\widehat F_l)^T\widehat
  F_l\right)}.
\end{align}
\end{subequations}
%There holds $\widehat{J}_l=|\widehat{E}_l|/|\widehat E|$.
%The Jacobian matrix and its determinant for the mapping 
%$\Phi_K^t:\widehat K\rightarrow K^t$ are denoted as 
%\[
%  F_K^t := \nabla_{\widehat {\mathbf x}}\Phi_K^t, \quad 
%  J_K^t := \mathrm{det}(F_K^t).
%\] 
%We also need the Jacobian matrix and its determinant for the ALE map 
%$\bld\phi_t:\Omega_0^f\rightarrow \Omega_t^f$:
%\[
%  {F^t_0} := \nabla_{\mathbf x_0}\bld \phi_t, \quad 
%  {J^t_0} := \mathrm{det}({F}^t_0).
%\]
A simple application of the chain rule implies that 
\[
F_K^t = F_0^t\, F_K^0, \quad J_K^t = J_0^tJ_K^0.
\]

We denote $\partial \Th^{f,t}:=\{\partial K^t\}$ as the collection of element
boundaries of the mesh $\Th^{f,t}$, where 
$\partial K^t=\{E_{K,l}^t\}_{l=1}^{d+1}$ is the boundary of element 
$K^t$, with $E_{K,l}^t:=\Phi_{K}^t(\widehat E_l)$ being the mapped facet.
Let $\widehat {\bld n}_l$ be the normal direction of the 
reference boundary facet $\widehat E_l$, and $\bld n_{K,l}^t$ be the normal 
direction of the physical boundary facet $E_{K,l}^t$. There holds 
\[
  \bld n_{K,l}^t
  \circ(\Phi_K^t)^{-1}
  = \frac{(F_K^t)^{-T}\,\widehat{\bld n}_l}{\|(F_K^t)^{-T}\,\widehat{\bld n}_l\|}.
\] 

We denote 
$\Eh^{f,t}:=\{E^t\}$ as the mesh skeleton of $\Th^{f,t}$,
which consists of all the facets.
Here $E^t:=\Psi_E^t(\widehat E)$ with
the mapping $\Phi_E^t$  constructed by composition:
for a facet $E^t=E_{T,l}^t$ that is the $l$-th boundary facet of an element 
$K^t$, we denote $\Psi_E^t:\widehat E\rightarrow E^t$ as
\[
\Psi_E^t:=\Phi_K^t\circ\widehat \Psi_l.
\]
We note that if the facet $E^t$ happens to be an interior facet which is also
the $m$-th boundary facet of another element $\widetilde K$, then there holds
\[
  \Psi_E^t=\Phi_K^t\circ\widehat \Psi_l=\Phi_{\widetilde K}^t\circ\widehat
  \Psi_m.
\] 
Hence, the facet mapping $\Psi_E^t$ is uniquely determined as
it does not depend on which associated volume element of $E^t$ is used.
We now denote the following surface 
Jacobian matrix, its Moore-Penrose pseudo inverse and determinant for the
mapping $\Psi_E^t$:
\[
  F_E^t:=\nabla_{\widehat{\mathbf z}}\Psi_E^t
  \in\mathbb{R}^{d\times
  (d-1)},\quad 
  (F_E^t)^{-1}:=
  \left((F_E^t)^TF_E^t\right)^{-1}F_E^t\in\mathbb{R}^{(d-1)\times
  d},
  \quad
  {J}_E^t := \sqrt{\mathrm{det}\left((\widehat F_E^t)^TF_E^t\right)}. 
\] 

\newcommand{\Whf}[1]{W_h^{#1, f}}
\newcommand{\Vhf}[1]{\bld V_h^{#1, f}}
\newcommand{\Shf}[1]{\bld \Sigma_h^{#1, f}}
\newcommand{\tWhf}[1]{\widetilde W_h^{#1, f}}
\newcommand{\tVhf}[1]{\widetilde{\bld V}_h^{#1, f}}

\newcommand{\Vhs}[1]{\bld V_h^{#1, s}}
\newcommand{\Shs}[1]{\bld \Sigma_h^{#1, s}}
\newcommand{\tVhs}[1]{\widetilde{\bld V}_h^{#1, s}}

\subsubsection{The finite element spaces}
We first introduce the following (discontinuous) 
finite element spaces on the mesh $\Th^{f,t}$: 
\begin{subequations}
  \label{spaces}
\begin{align}
  W_h^{k, f}:=
  &\;\{w\in L^2(\Omega_t^f): \;\;
    w|_{K^t} = \widehat{w}\circ (\Phi_K^t)^{-1},\;\; 
    \widehat{w}\in \pol^k(\widehat K), 
\;\forall K^t\in \Th^{f,t}\}, \\ 
    \bld V_h^{k,f}:=&\;\left\{\bld v\in [L^2(\Omega_t^f)]^d: \;\;
      \bld v|_{K^t} = \frac{1}{J_K^t}F_K^t \left(\widehat {\bld
      v}\circ(\Phi_K^t)^{-1}\right), 
      \;\; \widehat{\bld v}\in [\mathcal{P}^k(\widehat K)]^d, \;\forall
    K^t\in \Th^{f,t}\right\}, \\
  \bld\Sigma_h^{k, f}:=
                    &\;\{\bld\sigma\in [W_h^{k,f}]^{d\times d}: \;\;
                   \bld\sigma   \text{ is symmetric}
                    \}.
    \end{align}

Note that the standard pull-back mapping is used to define the
scalar finite element space $W_h^{k,f}$ and the symmetric tensor finite element space 
$\bld \Sigma_h^{k,f}$, 
which will be used to approximate 
the pressure field $p^f$, and the strain rate tensor
$\mathbf D_{\mathbf
x}(\bld u^f)$, respectively.
While the Piola mapping is used to define the vector
finite element space $\bld V_h^{f,k}$, which will be used 
to approximate the fluid velocity $\bld u^f$. 
The use of Piola mapping in $\bld V_h^{k,f}$ ensures a 
strong mass conservation 
for the HDG scheme \eqref{hdg-fluid}
%for the HDG scheme \eqref{hdg-fluid} below
on curved meshes, see Lemma \ref{lma:div-free} below.
%as the 
%Piola mapping preserves the divergence operator and normal component continuity.

We also need the following (hybrid) finite element spaces on the mesh skeleton
$\Eh^{f,t}$:
    \begin{align}
  \label{vhath0}
          \widetilde{W}_{h}^{k,f}:=&\;\{\widetilde w\in 
            L^2(\Eh^{f,t}): \;\;
            \widetilde w|_{E^t} = \widehat {
      w}\circ(\Psi_E^t)^{-1},
      \;\;\widehat {w}\in
    \pol^k(\widehat E), \quad\forall E^t\in\Eh^{f,t}\}, \\ 
        \widetilde{\bld V}_h^{k,f}:=&\;\{\widetilde{\bld v}
          \in [L^2(\Eh^{f,t})]^d: \;\;\widetilde{\bld v}|_{E^t}
          = \tng\left((F_E^t)^{-T} \,\widehat {\bld
          v}\circ(\Psi_E^t)^{-1}\right),
      \;\;\widehat {\bld v}\in
        [\pol^k(\widehat E)]^d,\quad \forall E^t\in\Eh^{f,t}\},
\end{align}
where $\tng(\bld v)|_{E} := \bld v - (\bld v\cdot \bld n_E)\bld n_E$
denotes the tangential component of the vector $\bld v$ on the facet $E$, whose
normal direction is $\bld n_E$.
%Note that we require that functions in these skeleton spaces $\tWhf{k}$
% and $\tVhf{k}$ being single-valued on the interior facets 
% $e_{ij}^t\in \Eh^{f,t}$, and the normal component of functions in $\tVhf{k}$ vanish on the
% mesh skeleton.  
Note that the normal component of functions in 
$\tVhf{k}$ vanishes on the whole mesh skeleton.
Here the standard pull-back mapping is used for 
the scalar skeleton space $\tWhf{k}$, which  will be used to approximate
 the normal-normal component of the stress,  
 $\bld n \cdot (2\mu_f\mathbf{D}_{\mathbf x}(\bld u^f)-p^f\mathbf I)\bld n$,  on
 the mesh skeleton,  and the covariant mapping is used for the 
 vector skeleton space $\tVhf{k}$, 
which preserves tangential continuity and 
 will be used to approximate the tangential component of
 fluid velocity, $\tng(\bld u^f)$,  on the mesh skeleton.
\end{subequations}

\subsection{The divergence-free HDG scheme: spatial discretization}
In this subsection, we focus on the spatial discretization of the 
equations \eqref{ns-eq-ale}. 
We work on the physical deformed domain $\Omega^f_t$ at a fixed time $t\in [0,
T]$. To introduce the scheme,  we first reformulate the equations \eqref{ns-eq-ale} to the following
first-order system:
\begin{subequations}
\label{ns-eq-reform}
  \begin{alignat}{2}
\label{ns-eq-reform1}
  \rho^f\left.\frac{\partial \bld u^f}{\partial t}\right|_{\mathbf x_0} 
    +\rho^f(\mathrm{div}_{\mathbf x}\bld \omega^f)\bld u^f
    +\mathrm{div}_{\mathbf x}(\rho^f(\bld u^f-\bld \omega^f)\otimes
    \bld u^f-\bld \sigma^f)
    =&\; \rho^f\bld f^f, \\
\label{ns-eq-reform2}
\bld \sigma^f-(2\mu^f\bld \epsilon^f-p^f\mathbf I)
    =&\; 0, \\
\label{ns-eq-reform3}
\bld \epsilon^f- \mathbf{D}_{\mathbf x}(\bld u^f)=&\; 0, \\
\label{ns-eq-reform4}
    \mathrm{div}_{\mathbf x}\bld u^f = &\;0.
\end{alignat}
\end{subequations}

Three local variables (defined on the mesh $\Th^{f,t}$), namely, the pressure $p_h^f$, velocity $\bld u_h^f$,
and strain rate tensor $\bld \epsilon_h^f$, 
and two global variables (defined on the mesh skeleton $\Eh^{f,t}$), 
namely the normal-normal component of the stress $\widetilde{\sigma}_h^f$, and the
tangential component of the velocity $\widetilde{\bld u}_h^f$ will be used in
our scheme.
We use polynomials of degree $k-1$ for the pressure 
approximation, and polynomials of degree $k\ge 1$ for the other variables, i.e., 
\[
  p_h^f\in \Whf{k-1}, \;\;
 \bld \epsilon_h^f\in\Shf{k},\;\;
 \bld u_h^f\in\Vhf{k},\;\;
 \widetilde{\sigma}_h^f\in\tWhf{k},\;\;
 \widetilde{\bld u}_h^f\in\tVhf{k},
\]

\newcommand{\intV}[3]{\left(#1, #2\right)_{\Th^{#3, t}}}
\newcommand{\intVX}[2]{\left(#1, #2\right)_{\Th^{f,0}}}
\newcommand{\intVo}[3]{\left(J_0^t#1, #2\right)_{\Th^{#3, 0}}}
\newcommand{\intF}[3]{\left\langle#1, #2\right\rangle_{\partial\Th^{#3, t}}}
\newcommand{\intVS}[2]{\left(#1, #2\right)_{\Th^{s}}}
\newcommand{\intFS}[2]{\left\langle#1, #2\right\rangle_{\partial\Th^{s}}}
\newcommand{\intG}[2]{\left\langle#1, #2\right\rangle_{\Gamma_t}}

The spatial discretization of our HDG scheme 
for the equations \eqref{ns-eq-reform} with homogeneous Dirichlet 
boundary conditions \eqref{dirichlet}
reads as follows:
Find 
$
(p_h^f,
 \bld \epsilon_h^f,
 \bld u_h^f,
 \widetilde{\sigma}_h^f,
 \widetilde{\bld u}_h^f)
 \in \Whf{k-1}\times\Shf{k}\times\Vhf{k}\times\tWhf{k}
 \times \tVhf{k}
 $ with $\widetilde{\bld u}_h^f|_{\Gamma_h^f}=0$
such that 
\begin{subequations}
  \label{hdg-fluid}
  \begin{align}
  \label{hdg-f1}
  \intV{
  \rho^f\left.\frac{\partial \bld u_h^f}{\partial t}\right|_{\mathbf x_0} 
%   +(\mathrm{div}_{\mathbf x}\bld \omega^f)\bld u_h^f
  }{\bld v_h^f}{f}
  +\intV{
   \rho^f(\mathrm{div}_{\mathbf x}\bld \omega^f)\bld u_h^f
  }{\bld v_h^f}{f}
  -\intV{
    \rho^f(\bld u_h^f-\bld \omega^f)\otimes
    \bld u_h^f
  }{\nabla_{\mathbf x}\bld v_h^f}{f}
  \hspace{-0.1\textwidth}&\\
  +2\mu^f\intV{\bld \epsilon_h^f
  }{\nabla_{\mathbf x}\bld v_h^f}{f}
  -\intV{p_h^f
  }{\mathrm{div}_{\mathbf x}\bld v_h^f}{f}
  -\intF{\widetilde{\bld {Flux}}_{\mathrm v}-\widetilde{\bld {Flux}}_{\mathrm c}
}{\;\bld v_h^f}{f}&\;=\;
  \intV{\rho^f\bld f^f}{\bld v_h^f}{f}, \nonumber\\
  \label{hdg-f2}
 2\mu^f \intV{\bld \epsilon_h^f-\mathbf D_{\mathbf x}(\bld u_h^f)}{\bld G_h^f}{f}
  +2\mu^f\intF{\tng(\bld u_h^f-\widetilde{\bld u}_h^f)}{\bld G_h^f\bld n}{f}
                                                  &\;=\;0,\\
  \label{hdg-f3}
 \intV{\mathrm{div}_{\mathbf x}\bld u_h^f}{q_h^f}{f} &\;=\;0,\\
  \label{hdg-f4}
  \intF{\bld u_h^f\cdot\bld n}{\widetilde\tau_h^f}{f} &\;=\;0,\\
  \label{hdg-f5}
  \intF{\widetilde{\bld {Flux}}_{\mathrm v}-\widetilde{\bld {Flux}}_{\mathrm c}
  }{\;\tng(\widetilde{\bld v}_h^f)}{f} &\;=\;0,
  \end{align}
  for all 
$
(q_h^f,
 \bld G_h^f,
 \bld v_h^f,
 \widetilde{\tau}_h^f,
 \widetilde{\bld v}_h^f)
 \in \Whf{k-1}\times\Shf{k}\times\Vhf{k}\times\tWhf{k}
 \times \tVhf{k}
 $ with $\widetilde{\bld v}_h^f|_{\Gamma_h^f}=0$, 
 where we write 
 $\intV{\eta}{\xi}{f}:=\sum_{K^t\in\Th^{f,t}}\int_{K^t}\eta\cdot\xi\mathrm{dx}$
 as the volume integral, and 
 $\intF{\eta}{\xi}{f}:=\sum_{K^t\in\Th^{f,t}}\int_{\partial K^t}\eta\cdot\xi\mathrm{ds}$
 as the element-boundary integral. 
 Here 
 %the stress field is explicitly given by the expression
 %$
 %\bld \sigma_h^f:= 2\mu^f\bld \epsilon_h^f-p_h^f\mathbf I,
 %$
 %and 
 the {\it viscous} and {\it convective}  numerical fluxes are defined as
 follows: 
 \begin{align}
   \label{flux}
   \widetilde{\bld{Flux}}_{\mathrm v}:= &\;
                            \widetilde{\sigma}_h^f\bld n
                            +2\mu^f\tng( \bld\epsilon_h^f\bld n)
                            -\alpha^f\;\tng(\bld u_h^f-\widetilde{\bld
                            u}_h^f),\\
   \widetilde{\bld{Flux}}_{\mathrm c}:= &\;
 \rho^f(\bld u_h^f-\bld \omega^f)\cdot \bld n \left((\bld u_h^f\cdot\bld n) \bld n
 +\tng(\check{\bld u}_h^{f,up})\right),
 \end{align}
\end{subequations}
where $\tng(\bld u_h^{f,up})$ is the following upwinding flux 
in the tangential direction:
\[
  \tng(\check{\bld u}_h^{f,up}):=\left\{
    \begin{tabular}{ll}
     $ \tng(\bld u_h^f)$ & if $(\bld u_h^f-\bld w^f)\cdot\bld n >0$,\\[3ex]
     $ \tng(\widetilde{\bld u}_h^f)$ & if $(\bld u_h^f-\bld w^f)\cdot\bld n \le 0$,
\end{tabular}
  \right.
\] 
and the (positive) stabilization parameter $\alpha^f$ is taken to be 
$\alpha^f=2\mu^f$.

The following result shows that the fluid velocity approximation is 
globally divergence free.
\begin{lemma}\label{lma:div-free}
  The semi-discrete scheme \eqref{hdg-fluid} produces a 
  globally divergence-free velocity approximation which has a 
  vanishing normal component on the domain boundary, i.e.,
  $
    \bld u_h^f\in H_0(\mathrm{div};\Omega_t^f)$, and 
  $  \mathrm{div}_{\mathbf x}\,\bld u_h^f=0.$
\end{lemma}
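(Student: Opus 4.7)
The plan is to derive the two claims of the lemma from equations \eqref{hdg-f3} and \eqref{hdg-f4} separately, exploiting the Piola mapping used in the definition of $\bld V_h^{k,f}$ together with the polynomial-degree matching with $W_h^{k-1,f}$ and $\widetilde{W}_h^{k,f}$. The key structural properties I will use are that the Piola transform pushes $\mathrm{div}_{\widehat{\mathbf x}} \widehat{\bld u}$ to $J_K^t\,\mathrm{div}_{\mathbf x}\bld u_h^f$, that it sends $\widehat{\bld u}\cdot\widehat{\bld n}_l$ to a multiple of $\bld u_h^f\cdot\bld n$ on the physical facet, and that the surface Jacobian of $\Psi_E^t=\Phi_K^t\circ\widehat\Psi_l$ factors as $J_K^t\|(F_K^t)^{-T}\widehat{\bld n}_l\|\,\widehat J_l$.

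For the element-wise vanishing divergence, I would change variables in \eqref{hdg-f3} from $K^t$ to the reference $\widehat K$. The $1/J_K^t$ in the Piola divergence cancels the $J_K^t$ in the volume form, and one is left with
\begin{equation*}
  \int_{\widehat K}\mathrm{div}_{\widehat{\mathbf x}}\widehat{\bld u}\,\widehat q\,d\widehat{\mathbf x} = 0 \qquad \forall\,\widehat q\in\mathcal{P}^{k-1}(\widehat K).
\end{equation*}
Since $\widehat{\bld u}\in[\mathcal{P}^k(\widehat K)]^d$ implies $\mathrm{div}_{\widehat{\mathbf x}}\widehat{\bld u}\in\mathcal{P}^{k-1}(\widehat K)$, this test space is rich enough to let me take $\widehat q = \mathrm{div}_{\widehat{\mathbf x}}\widehat{\bld u}$ and conclude $\mathrm{div}_{\widehat{\mathbf x}}\widehat{\bld u}=0$, hence $\mathrm{div}_{\mathbf x}\bld u_h^f = 0$ on each $K^t$.

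For the $H_0(\mathrm{div})$-membership, I would pull back the facet integral in \eqref{hdg-f4} analogously. Using the formula for $\bld n_{K,l}^t$ and the surface-Jacobian factorization above, the integrand $\bld u_h^f\cdot\bld n$ against $\widetilde\tau_h^f$ on $E^t_{K,l}$ collapses, after cancellation, to $\widehat J_l\int_{\widehat E}(\widehat{\bld u}\circ\widehat\Psi_l\cdot\widehat{\bld n}_l)\,\widehat\tau\,d\widehat s$ where $\widehat\tau\in\mathcal{P}^k(\widehat E)$ is the pull-back of $\widetilde\tau_h^f$. On an interior facet $E^t$ shared by $K^t$ and $\widetilde K^t$, summing the two contributions gives the skeleton integral of the jump $\jmp{\bld u_h^f\cdot\bld n}$ against $\widetilde\tau_h^f$; since this jump is itself in the image of the pull-back of $\mathcal{P}^k(\widehat E)$ on both sides, I can choose $\widetilde\tau_h^f$ locally to equal the jump and conclude that it vanishes. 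On a boundary facet only one contribution appears, and the same argument forces $\bld u_h^f\cdot\bld n = 0$ on $\Gamma_h^f$.

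The main obstacle is the bookkeeping of Jacobian factors and polynomial degrees in the Piola change of variables on curved meshes: one must verify that the normal trace of a Piola-mapped $[\mathcal{P}^k(\widehat K)]^d$ function, once pulled back to the reference facet via $\widehat\Psi_l$, lies in the same $\mathcal{P}^k(\widehat E)$ that defines $\widetilde{W}_h^{k,f}$, and that the divergence lands in $\mathcal{P}^{k-1}(\widehat K)$ matching $W_h^{k-1,f}$. Once these degree identifications are confirmed, each step reduces to testing against the quantity one wishes to nullify, and the conclusion $\bld u_h^f\in H_0(\mathrm{div};\Omega_t^f)$ with $\mathrm{div}_{\mathbf x}\bld u_h^f=0$ follows.
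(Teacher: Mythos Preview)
Your proposal is correct and follows essentially the same route as the paper: pull the divergence constraint \eqref{hdg-f3} back to $\widehat K$ via the Piola identity $\mathrm{div}_{\mathbf x}\bld u_h^f\circ\Phi_K^t=\frac{1}{J_K^t}\mathrm{div}_{\widehat{\mathbf x}}\widehat{\bld u}$ and test against $\widehat q=\mathrm{div}_{\widehat{\mathbf x}}\widehat{\bld u}\in\pol^{k-1}(\widehat K)$, then pull the normal-trace constraint \eqref{hdg-f4} back to $\widehat E$ using the surface-Jacobian factorization to reduce the jump to a quantity in $\pol^k(\widehat E)$ and test against it. The paper carries out exactly these computations, with the only added detail being the explicit formula $(\bld u_h^f\cdot\bld n_{K,l})\circ\Phi_K=\frac{1}{J_K\|(F_K)^{-T}\widehat{\bld n}_l\|}\,\widehat{\bld u}\cdot\widehat{\bld n}_l$ that makes the ``bookkeeping'' you flag concrete.
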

\begin{proof}
%  We only need to prove that
%  the element-wise divergence of $\bld u_h^f$ is {\it zero}:
%  \begin{align*}
%    \mathrm{div}_{\mathbf x}(\bld u_h^f)|_{K^t} = &\;0 , \quad \forall
%    K\in\Th^{f,t}
%  \end{align*}
%and that the normal component of $\bld u_h^f$ across interior element boundaries 
%is continuous:
%  \begin{align*}
%    \left.(\bld u_h^f \cdot\bld n_{K_i}^t)|_{K_i^t}
%      +(\bld u_h^f \cdot\bld n_{K_j}^t)|_{K_j^t}
%    \right|_{E_{ij}^t} = &\;0 , \quad \forall
%    E_{ij}^t \in\Eh^{f,t},
%  \end{align*}
%  where $E_{ij}^t=K_i^t\cap K_j^t$ is an interior facet shared by elements
%  $K_i^t$ and $K_j^t$.
%
  To simplify notation, we suppress the superscript $t$ in the following derivation.
Since functions in the finite element space $\Vhf{k}$ are transformed via the
Piola mapping, we have
  $\bld u_h^f|_{K}=\frac{1}{J_K}F_K\left(
  \widehat{\bld u}\circ(\Phi_K)^{-1}\right)$
  for some function $\widehat{\bld u}\in [\pol^k(\widehat K)]^d$.
%  for an element $K\in\Th^{f,t}$.
  %elment $K^t\in\Th^{f,t}$.
  It is well-known \cite{BoffiBrezziFortin13} that 
  the following property holds for Poila transformations:
\begin{align*}
    \mathrm{div}_{\mathbf x}(\bld u_h^f)\circ\Phi_K
    =&\; \frac{1}{J_K} \mathrm{div}_{\widehat {\mathbf x}}(\widehat{\bld
    u}),
\end{align*}
which implies that
\begin{align*}
  \int_{K} \mathrm{div}_{\mathbf x}(\bld u_h^f)\,
  \widehat q\circ(\Phi_K)^{-1}\mathrm{dx}
  =&\;
  \int_{\widehat K} \mathrm{div}_{\widehat{\mathbf x}}(\widehat{\bld u})
  \widehat q\;\mathrm{d\widehat x}, 
\end{align*}
for any function $\widehat q$ on the reference element $\widehat
K$.
  Since we have 
  $\mathrm{div}_{\widehat{\mathbf x}}(\widehat{\bld u})\in \pol^{k-1}(\widehat
  K)$, by the definition of the scalar finite element space $\Whf{k-1}$,  
  we can take the test function $q_h^f\in \Whf{k-1}$ in \eqref{hdg-f3} such that  
  $q_h^f|_{K} =\mathrm{div}_{\widehat{\mathbf
  x}}(\widehat{\bld u})\circ(\Phi_K)^{-1}$ and 
  $q_h^f=0$ elsewhere, which leads to 
  \[
    0=\int_{K}
    \mathrm{div}_{\mathbf x}(\bld u_h^f)
\,\mathrm{div}_{\widehat{\mathbf x}}(\widehat{\bld u})\circ(\Phi_K)^{-1}
\,\mathrm{dx}
    = 
    \int_{\widehat K}
    (\mathrm{div}_{\widehat{\mathbf x}}(\widehat{\bld
    u}))^2\,\mathrm{d\widehat{x}}.
  \]
Hence,
\begin{align}
  \label{div-free}
    \mathrm{div}_{\mathbf x}(\bld
    u_h^f)|_{K}=
    \frac{1}{J_K}
\mathrm{div}_{\widehat{\mathbf x}}(\widehat{\bld u})\, \circ(\Phi_K)^{-1}
    =0.
\end{align}

    Next, let us prove normal continuity of $\bld u_h^f$ across
    interior element boundaries.
%    To simplify notation, we suppress the superscript $t$ on the 
    Let $E_{ij}\in\Eh^{f,t}$ be an interior facet which is the 
    $l$-th facet, $E_{K_i,l}$, of
    element $K_i$,  and the $m$-th facet, $E_{K_j,m}$, of element $K_j$.
    Restricting the equation \eqref{hdg-f4} to the facet 
    $E_{ij}$, we have 
\begin{align}
  \label{normal1}
  \int_{E_{K_i,l}} (\bld u_h^f\cdot\bld n_{K_i,l})
  \widehat{\tau}\circ
  (\Psi_{E_{ij}})^{-1}\mathrm{ds}
+ \int_{E_{K_j,m}} (\bld u_h^f\cdot\bld n_{K_j,m})
\widehat{\tau}\circ(\Psi_{E_{ij}})^{-1} \mathrm{ds} = 0,
\quad \forall \widehat \tau \in\pol^k(\widehat E). 
\end{align}
Let $\widehat {\bld u}_i$ and $\widehat {\bld u}_j$ be functions in 
$[\pol^k(\widehat K)]^d$ such that 
\[
  \bld u_h^f|_{K_i} = 
\frac{1}{J_{K_i}}F_{K_i}\widehat {\bld u}_i, \quad 
  \bld u_h^f|_{K_j} = 
\frac{1}{J_{K_j}}F_{K_j}\widehat {\bld u}_j.
\] 
A simple calculation yields 
\begin{align}
  \label{ratio}
  (\bld u_h^f\cdot \bld n_{K_i,l})\circ\Phi_{K_i}
      =\; 
      \frac{1}{J_{K_i}\|(F_{K_i})^{-T}\widehat{\bld n}_{l}\|} 
      \widehat{\bld u}_i\cdot\widehat {\bld n}_l\quad \text{ on }
      \widehat E_{l},
\end{align}
where $J_{K_i}\|(F_{K_i})^{-T}\widehat{\bld n}_{l}\| = 
|E_{ij}|/|\widehat E_l|
$ is the ratio of facet measures.
Hence, we have 
\begin{align*}
  \int_{E_{K_i,l}} (\bld u_h^f\cdot\bld n_{K_i,l})
  \widehat{\tau}\circ
  (\Psi_{E_{ij}})^{-1}\mathrm{ds}
  = &\; \int_{\widehat E_l}(\widehat{\bld u}_i\cdot\widehat {\bld n}_l)
  \widehat{\tau}\circ(\widehat\Psi_l)^{-1}\,\mathrm{d\widehat s}
  = \; \int_{\widehat E}\left(\widehat J_l (\widehat{\bld u}_i\cdot\widehat {\bld n}_l)
  \circ\widehat\Psi_l\right)
  \widehat{\tau}\,\mathrm{d\widehat s}.
\end{align*}
Substituting the above equation back to \eqref{normal1}, we
get
\[
  \int_{\widehat E}\left(
    \widehat J_l (\widehat{\bld u}_i\cdot\widehat {\bld n}_l)
  \circ\widehat\Psi_l
+
    \widehat J_m (\widehat{\bld u}_j\cdot\widehat {\bld n}_m)
  \circ\widehat\Psi_m
\right)
  \widehat{\tau}\,\mathrm{d\widehat s} = 0, \quad \forall \widehat
  \tau\in\pol^k(\widehat E).
\] 
Since 
$\widehat J_l (\widehat{\bld u}_i\cdot\widehat {\bld n}_l)
  \circ\widehat\Psi_l
+
    \widehat J_m (\widehat{\bld u}_j\cdot\widehat {\bld n}_m)
    \circ\widehat\Psi_m\in \pol^k(\widehat E)$, it must be {\it zero}. 
    Finally, by equation \eqref{ratio} and the fact that 
    $\widehat J_l = |\widehat E_l|/|\widehat E|$, we have
\begin{align}
  \label{normal-cont}
  \left( (\bld u_h^f\cdot \bld n_{K_i,l})|_{K_{i}}+
  (\bld u_h^f\cdot \bld n_{K_i,l}
)|_{K_j}\right)\circ\Psi_{E_{ij}}
  = 
  \frac{|\widehat E|}{|\widehat E_{ij}|}(\widehat{J}_l 
(\widehat{\bld u}_i\cdot\widehat {\bld n}_l)\circ \widehat\Psi_l
+\widehat{J}_m
(\widehat{\bld u}_j\cdot\widehat {\bld n}_m)\circ \widehat\Psi_m)
=0
\end{align}
Similarly, we can prove $\bld u_h^f\cdot\bld n|_{\Gamma_h^f}=0$ by 
working on the equation \eqref{hdg-f4} on the domain boundary.
Combining the results \eqref{div-free} and \eqref{normal-cont}, we readily get
$\bld u_h^f\in H_0(\mathrm{div};\Omega)$ and 
$\mathrm{div}_{\mathbf x}\bld u_h^f=0
$.
This completes the proof.
\end{proof}

To further simplify notation, 
we introduce the following operators
associated with the scheme \eqref{hdg-fluid}:
\begin{subequations}
\label{operators-f}
\small
\begin{align}
  \mathcal{M}_h^f(\bld u_h^f, \bld v_h^f) :=&\; 
  \intV{
  \rho^f
  \bld u_h^f
  }{\bld v_h^f}{f},\\
  \mathcal{C}_h^f\left(\bld u_h^f-\bld \omega^f; (\bld u_h^f,\widetilde{\bld u}_h^f),
  (\bld v_h^f,\widetilde{\bld v}_h^f)\right)
  := &\;
  \intV{
   \rho^f(\mathrm{div}_{\mathbf x}\bld \omega^f)\bld u_h^f
  }{\bld v_h^f}{f}
     -\intV{
    \rho^f(\bld u_h^f-\bld \omega^f)\otimes
    \bld u_h^f
  }{\nabla_{\mathbf x}\bld v_h^f}{f}
  \\\nonumber
     &\;
     \hspace{-.13\textwidth}
  +\intF{\widetilde{\bld {Flux}}_{\mathrm c}}{
  \tng(\bld v_h^f-\widetilde{\bld v}_h^f)}{f}
  +\intF{\rho^f(\bld u_h^f-\bld \omega^f)\cdot\bld n (\bld u_h^f\cdot \bld n)}{
\bld v_h^f\cdot \bld n
}{f},\\
  \mathcal{A}_h^f\left(
    (p_h^f, \bld \epsilon_h^f, \bld u_h^f,
    \widetilde{\sigma}_h^f, \widetilde{\bld u}_h^f
    ),
  (q_h^f,\bld G_h^f, \bld v_h^f,\right.&\left.
    \widetilde{\tau}_h^f, \widetilde{\bld v}_h^f
    )\right)
  := \;
  2\mu^f\intV{\bld \epsilon_h^f
  }{\nabla_{\mathbf x}\bld v_h^f}{f}
  -\intV{p_h^f
  }{\mathrm{div}_{\mathbf x}\bld v_h^f}{f}
 \\\nonumber
 &\;\hspace{-0.1\textwidth}
 -\intF{\widetilde{\bld {Flux}}_{\mathrm v}}{\;\tng(\bld v_h^f-\widetilde{\bld
  v}_h^f)}{f}
  -\intF{\widetilde{\sigma}_h^f}{\bld v_h^f\cdot\bld n}{f}
  \\\nonumber
 &\;\hspace{-0.1\textwidth}
 +2\mu^f \intV{\bld \epsilon_h^f-\mathbf D_{\mathbf x}(\bld u_h^f)}{\bld G_h^f}{f}
  +2\mu^f\intF{\tng(\bld u_h^f-\widetilde{\bld u}_h^f)}{\bld G_h^f\bld n}{f}
 \\\nonumber
 &\;\hspace{-0.1\textwidth}
  +\intV{\mathrm{div}_{\mathbf x}\bld u_h^f}{q_h^f}{f} 
  +\intF{\bld u_h^f\cdot\bld n}{\widetilde\tau_h^f}{f},\\
  \mathcal{F}_h^f(\bld v_h^f):=&\;
  \intV{\rho^f\bld f^f}{\bld v_h^f}{f}.
\end{align}
\end{subequations}
Then the equations in \eqref{hdg-fluid} can be expressed as the following
compact form:
\begin{align}
  \label{hdg-fluid-c}
  \mathcal{M}_h^f(
  \left.\frac{\partial \bld u_h^f}{\partial t}\right|_{\mathbf x_0} 
  , \bld v_h^f) 
  +
  \mathcal{C}_h^f\left(\bld u_h^f-\bld \omega^f; (\bld u_h^f,\widetilde{\bld u}_h^f),
  (\bld v_h^f,\widetilde{\bld v}_h^f)\right)\quad\quad& \\\nonumber
  +\mathcal{A}_h^f\left(
    (p_h^f, \bld \epsilon_h^f, \bld u_h^f,
    \widetilde{\sigma}_h^f, \widetilde{\bld u}_h^f
    ),
  (q_h^f,\bld G_h^f, \bld v_h^f,
    \widetilde{\tau}_h^f, \widetilde{\bld v}_h^f
)\right)&=\; \mathcal{F}_h^f(\bld v_h^f).
\end{align}

Energy stability of the semi-discrete scheme is given below.
\begin{theorem}\label{thm:stability}
Let 
$
(p_h^f,
 \bld \epsilon_h^f,
 \bld u_h^f,
 \widetilde{\sigma}_h^f,
 \widetilde{\bld u}_h^f)
 \in \Whf{k-1}\times\Shf{k}\times\Vhf{k}\times\tWhf{k}
 \times \tVhf{k}
 $ be the solution to the scheme \eqref{hdg-fluid}.
 The following energy identity holds:
\[
\frac12 \frac{d}{dt}\intV{
  \rho^f\bld u_h^f}{\bld u_h^f}{f}
+
2\mu^f\intV{\bld \epsilon_h^f}{\bld \epsilon_h^f}{f}
+\intF{\gamma_h\left|\tng(\bld u_h^f-\widetilde{\bld
    u}_h^f)\right|^2}{1}{f}
  =\intV{\rho^f\bld f^f}{\bld v_h^f}{f},
\] 
where the parameter 
$
  \gamma_h := \alpha^f + \rho^f|(\bld u_h^f-\bld \omega_h^f)\cdot\bld n|
$
is positive.
\end{theorem}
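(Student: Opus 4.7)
The plan is to test the compact scheme \eqref{hdg-fluid-c} with the discrete solution itself, i.e., take $(q_h^f, \bld G_h^f, \bld v_h^f, \widetilde\tau_h^f, \widetilde{\bld v}_h^f) = (p_h^f, \bld \epsilon_h^f, \bld u_h^f, \widetilde\sigma_h^f, \widetilde{\bld u}_h^f)$ in equations \eqref{hdg-f1}--\eqref{hdg-f5}, and then match each summand to a term of the target identity. For the kinetic--energy block $\mathcal{M}_h^f$, I would apply the Reynolds transport formula on the moving domain, $\tfrac{d}{dt}\intV{g}{1}{f} = \intV{\partial_t|_{\mathbf x_0} g}{1}{f} + \intV{g\,\mathrm{div}_{\mathbf x}\bld\omega^f}{1}{f}$, to $g = \tfrac12 \rho^f |\bld u_h^f|^2$. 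This converts $\intV{\rho^f\,\partial_t|_{\mathbf x_0}\bld u_h^f}{\bld u_h^f}{f}$ into $\tfrac12 \tfrac{d}{dt}\intV{\rho^f \bld u_h^f}{\bld u_h^f}{f}$ minus $\tfrac12\intV{\rho^f (\mathrm{div}_{\mathbf x}\bld\omega^f)|\bld u_h^f|^2}{1}{f}$, so that together with the first term $\intV{\rho^f (\mathrm{div}_{\mathbf x}\bld\omega^f)\bld u_h^f}{\bld u_h^f}{f}$ of $\mathcal{C}_h^f$ there remains a surplus $\tfrac12\intV{\rho^f (\mathrm{div}_{\mathbf x}\bld\omega^f)|\bld u_h^f|^2}{1}{f}$ to be absorbed in the next step.

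For the remainder of the convective block, I would element-wise integrate by parts the volume term $-\intV{\rho^f(\bld u_h^f-\bld\omega^f)\otimes \bld u_h^f}{\nabla_{\mathbf x}\bld u_h^f}{f}$, using $\mathrm{div}_{\mathbf x}\bld u_h^f = 0$ from Lemma \ref{lma:div-free} to annihilate the internal divergence contribution and to swallow the above surplus $\mathrm{div}_{\mathbf x}\bld\omega^f$ term. What is left is the element-boundary integral $-\intF{\tfrac12 \rho^f(\bld u_h^f-\bld\omega^f)\cdot\bld n \, |\bld u_h^f|^2}{1}{f}$, which must be combined with the two boundary contributions carried by $\widetilde{\bld{Flux}}_c$ in $\mathcal{C}_h^f$. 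Decomposing $\bld u_h^f = \widetilde{\bld u}_h^f + \tng(\bld u_h^f - \widetilde{\bld u}_h^f) + (\bld u_h^f\cdot\bld n)\bld n$, using the normal continuity of $\bld u_h^f$ on interior facets and its vanishing on $\Gamma_h^f$ (Lemma \ref{lma:div-free}) to cancel all purely normal quadratic contributions, and performing a case analysis on the sign of $(\bld u_h^f-\bld\omega^f)\cdot\bld n$ for the upwind choice $\check{\bld u}_h^{f,\mathrm{up}}$, the surviving contribution is the nonnegative dissipation $\intF{\rho^f|(\bld u_h^f-\bld\omega^f)\cdot\bld n|\,|\tng(\bld u_h^f-\widetilde{\bld u}_h^f)|^2}{1}{f}$.

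For the viscous/mixed block $\mathcal{A}_h^f$, I would test \eqref{hdg-f2} with $\bld G_h^f = \bld\epsilon_h^f$ and invoke the symmetry of $\bld\epsilon_h^f$ (so $\mathbf D_{\mathbf x}(\bld u_h^f):\bld\epsilon_h^f = \nabla_{\mathbf x}\bld u_h^f : \bld\epsilon_h^f$); combined with the $2\mu^f\intV{\bld\epsilon_h^f}{\nabla_{\mathbf x}\bld u_h^f}{f}$ term in \eqref{hdg-f1}, the tangential part $2\mu^f\tng(\bld\epsilon_h^f\bld n)-\alpha^f\tng(\bld u_h^f-\widetilde{\bld u}_h^f)$ of $\widetilde{\bld{Flux}}_v$, and the vanishing of \eqref{hdg-f5} tested against $\widetilde{\bld u}_h^f$, this produces $2\mu^f\intV{\bld\epsilon_h^f}{\bld\epsilon_h^f}{f} + \alpha^f\intF{|\tng(\bld u_h^f-\widetilde{\bld u}_h^f)|^2}{1}{f}$. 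The pressure--divergence pair from \eqref{hdg-f1} and \eqref{hdg-f3} cancels (and vanishes separately by Lemma \ref{lma:div-free}), and the normal--stress pair between the $\widetilde\sigma_h^f$ part of $\widetilde{\bld{Flux}}_v$ and \eqref{hdg-f4} cancels identically. Assembling everything and identifying $\gamma_h = \alpha^f + \rho^f|(\bld u_h^f-\bld\omega^f)\cdot\bld n|$ yields the stated identity. The main obstacle will be the convective step: carefully matching the element-boundary remainder from integration by parts, the ``central'' normal flux, and the upwinded tangential flux across each interior facet so that all cross terms telescope and only the nonnegative dissipation survives; this relies essentially on the exact normal continuity of $\bld u_h^f$ guaranteed by Lemma \ref{lma:div-free}.
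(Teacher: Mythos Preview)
Your proposal is correct and follows essentially the same route as the paper: test \eqref{hdg-fluid-c} with the solution itself, handle the time-derivative term via the Reynolds transport/Jacobian identity $\partial_t J_0^t = J_0^t\,\mathrm{div}_{\mathbf x}\bld\omega^f$, integrate the convection term by parts element-wise and use Lemma~\ref{lma:div-free} plus the upwind case split to reduce the boundary remainder to a nonnegative tangential jump term, and simplify $\mathcal A_h^f$ using the symmetry of $\bld\epsilon_h^f$ together with the cancellation of the pressure/divergence and $\widetilde\sigma_h^f$/normal-trace pairs. The only slip is the constant in front of the upwind dissipation: carrying out your case analysis carefully yields $\tfrac12\rho^f|(\bld u_h^f-\bld\omega^f)\cdot\bld n|$, which is also what the paper's computation of $T_2$ produces.
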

\begin{proof}
Taking test functions in equations \eqref{hdg-fluid} 
as the trial functions, we get
%and 
%the fact that 
%  $\bld u_h^f\in H_0(\mathrm{div};\Omega_f^t)$ and 
%  $\mathrm{div}_{\mathbf x}\bld u_h^f=0$ from Lemma \ref{lma:div-free}, we have
\begin{align}
  \label{X1}
  T_1+T_2+T_3 = 
  \mathcal{F}_h^f(\bld v_h^f),
\end{align}
where
\begin{align*}
  T_{1} = 
  &\;
  \mathcal{M}_h^f(
  \left.\frac{\partial \bld u_h^f}{\partial t}\right|_{\mathbf x_0} 
  , \bld u_h^f),\quad \quad T_{2} =\;
  \mathcal{C}_h^f\left(\bld u_h^f-\bld \omega^f; (\bld u_h^f,\widetilde{\bld u}_h^f),
  (\bld u_h^f,\widetilde{\bld u}_h^f)\right),& \\\nonumber
  T_{3} =&\;
  \mathcal{A}_h^f\left(
    (p_h^f, \bld \epsilon_h^f, \bld u_h^f,
    \widetilde{\sigma}_h^f, \widetilde{\bld u}_h^f
    ),
  (p_h^f,\bld \epsilon_h^f, \bld u_h^f,
    \widetilde{\sigma}_h^f, \widetilde{\bld u}_h^f
)\right). 
\end{align*}
%Taking test function $\bld v_h^f=\bld u_h^f$ in equation \eqref{hdg-f1}
%and test function $\widetilde{\bld v}_h^f=\widetilde{\bld u}_h^f$
%in equation \eqref{hdg-f5},  adding the resulting equations
%and using the fact that 
%  $\bld u_h^f\in H_0(\mathrm{div};\Omega_f^t)$ and 
%  $\mathrm{div}_{\mathbf x}\bld u_h^f=0$ from Lemma \ref{lma:div-free}, we get
%\begin{align}
%  \label{X1}
%  T_1+T_2+T_3 = 
%  \intV{\rho^f\bld f^f}{\bld v_h^f}{f}.
%\end{align}
%Let us simplify each term .

Using the well-known identity 
$
  \frac{\partial }{\partial t}J_0^t = J_0^t\,\mathrm{div}_{\mathbf x}\bld
  \omega^f,
$
 we get 
\begin{align*}
  T_{1} = 
  &\; 
\frac12 \frac{d}{dt}\intVo{
  \rho^f\bld u_h^f}{\bld u_h^f}{f}
-
\frac12 \intVo{
(\mathrm{div}_{\mathbf x}\bld \omega^f)\rho^f\bld u_h^f}{\bld u_h^f}{f}\\
=&\;
\frac12 \frac{d}{dt}\intV{
  \rho^f\bld u_h^f}{\bld u_h^f}{f}
-
\frac12 \intV{\rho^f
(\mathrm{div}_{\mathbf x}\bld \omega^f)\bld u_h^f}{\bld u_h^f}{f},
\end{align*}
%Hence, 
%\[
%T_1 = 
%\frac12 \frac{d}{dt}\intV{
%  \rho^f\bld u_h^f}{\bld u_h^f}{f}
%+
%\frac12 \intV{\rho^f
%(\mathrm{div}_{\mathbf x}\bld \omega)\bld u_h^f}{\bld u_h^f}{f}.
%\]

Integration by parts 
yields that 
\begin{align*}
  \intV{
    \rho^f(\bld u_h^f-\bld \omega^f)\otimes
    \bld u_h^f
  }{\nabla_{\mathbf x}\bld u_h^f}{f}
  =&\;
 -\frac12 \intV{
   \rho^f\mathrm{div}(\bld u_h^f-\bld \omega^f)
    \bld u_h^f
  }{\bld u_h^f}{f}
  +\frac12\intF{
    \rho^f(\bld u_h^f-\bld \omega^f)\cdot\bld n \;
    \bld u_h^f
  }{\bld u_h^f}{f}\\
  &\hspace{-2cm} =\;
 \frac12 \intV{
   \rho^f(\mathrm{div}\bld \omega^f)
    \bld u_h^f
  }{\bld u_h^f}{f}
  +\frac12\intF{
    \rho^f(\bld u_h^f-\bld \omega^f)\cdot\bld n \;
    \tng(\bld u_h^f)
  }{\tng(\bld u_h^f)}{f},
\end{align*}
where we 
%again 
used the fact that 
  $\bld u_h^f\in H_0(\mathrm{div};\Omega_f^t)$ and 
  $\mathrm{div}_{\mathbf x}\bld u_h^f=0$ 
in the last step.
Combining the above identity with the definition of $T_2$, 
and the definition 
of the upwinding flux $\check {\bld u}_h^{f,up}$
and simplifying the terms, we get
\begin{align*}
  T_2 = 
 \frac12 \intV{
   \rho^f(\mathrm{div}\bld \omega^f)
    \bld u_h^f
  }{\bld u_h^f}{f}
  +\frac12\intF{
    \rho^f\left|(\bld u_h^f-\bld \omega^f)\cdot\bld n\right| \;
    \tng( \bld u_h^f-\widetilde{\bld u}_h^f)
  }{
    \tng( \bld u_h^f-\widetilde{\bld u}_h^f)
}{f}
\end{align*}

%Taking test function $\bld G_h^f=\bld\epsilon_h^f$ in equation \eqref{hdg-f2}, 
%we get 
%\begin{align*}
%  0=&\; \intV{\bld \epsilon_h^f-\mathbf D_{\mathbf x}(\bld u_h^f)}{\bld \epsilon_h^f}{f}
%  +\intF{\tng(\bld u_h^f-\widetilde{\bld u}_h^f)}{\bld \epsilon_h^f\bld n}{f}\\
%  =&\; \intV{\bld \epsilon_h^f-\nabla_{\mathbf x}(\bld u_h^f)}{\bld \epsilon_h^f}{f}
%  +\intF{\tng(\bld u_h^f-\widetilde{\bld u}_h^f)}{\bld \epsilon_h^f\bld
%  n}{f},
%\end{align*}
%where we used symmetry of the strain tensor $\bld \epsilon_h^f$ in the last
%step.
%Combining this equation with the definition of $T_3$, we get
Simplifying the terms in $T_3$, we obtain
\[
T_3 = 
2\mu^f\intV{\bld \epsilon_h^f}{\bld \epsilon_h^f}{f}
+\intF{\alpha^f\,\tng(\bld u_h^f-\widetilde{\bld
    u}_h^f)}{\tng(\bld u_h^f-\widetilde {\bld u}_h^f)}{f}.
\] 
The equality in Theorem \ref{thm:stability} is then obtained by simply 
substituting the above three terms back to equation \eqref{X1}.
%Adding these three terms, we get 
%\begin{align*}
%  \intV{\rho^f\bld f^f}{\bld v_h^f}{f}
%  =&\;T_1+T_2+T_3 \\
%  = &\;
%\frac12 \frac{d}{dt}\intV{
%  \rho^f\bld u_h^f}{\bld u_h^f}{f}
%+
%2\mu^f\intV{\bld \epsilon_h^f}{\bld \epsilon_h^f}{f}
%+\intF{\gamma_h \left|\tng(\bld u_h^f-\widetilde{\bld
%    u}_h^f)\right|^2}{1}{f},
%\end{align*}
%where recall that  
%$
%  \gamma_h = \alpha^f + \rho^f|(\bld u_h^f-\bld \omega_h^f)\cdot\bld n|.
%$
%This completes the proof.
\end{proof}

\subsection{ALE-divergence-free HDG scheme: temporal discretization}
In this subsection,
we consider the temporal discretization 
for the semidiscrete scheme
\eqref{hdg-fluid}. 

%Due to the use of Piola mapping in the velocity space $\Vhf{k}$, an additional
%term arises for the discretization of the 
%$
%$
%in order to take into account the temporal 
%derivative of the Piola mapping; see \cite{Neunteufel21}. 
We work on the time derivative term in \eqref{hdg-f1}, restricting to 
a single element $K^t\in\Th^{f,t}$.
Let 
$
  \left\{\widehat{\bld \psi}_i(\widehat{\mathbf x})
 \right \}_{i=1}^N
$
be a set of basis functions for the space $[\pol^k(\widehat K)]^d$,
where 
$
  N = d{{n+d}\choose{k}}=
\mathrm{dim}[\pol^k(\widehat K)]^d.
$ 
Then the function $\bld u_h^f$, restricting to the element $K^t$, 
can be expressed as 
\[
  \left.  \bld u_h^f\right|_{K^t} = 
    \frac{1}{J_K^t}F_K^t\sum_{i=1}^N
    \mathsf{u}_i(t)
    \widehat{\bld \psi}_i \circ(\Phi_K^t)^{-1}
    = 
    \frac{1}{J_0^t}F_0^t\sum_{i=1}^N
    \mathsf{u}_i(t)
    {\bld \psi}_i^0 \circ(\bld \phi_t)^{-1},
\] 
where $\mathsf{u}_i:[0,T]\rightarrow \mathbb{R}$ 
is the coefficient associated to the $i$-th basis, and 
\[
  {\bld \psi}_i^0(\mathbf {x}_0) = 
    \frac{1}{J_K^0}F_K^0
    \widehat{\bld \psi}_i \circ(\Phi_K^0)^{-1}(\mathbf x_0)
\]
is the mapped (time-independent) basis on the initial element $K^0$.
Applying the chain rule, we get
\begin{align}
  \label{ale-time}
  \left.\frac{\partial \bld u_h^f}{\partial t}\right|_{\mathbf x_0}
    = &\;
  \frac{\partial }{\partial t}(\frac{1}{J_0^t}F_0^t)
  \sum_{i=1}^N
    \mathsf{u}_i(t)
    {\bld \psi}_i^0 
  \circ(\bld \phi_t)^{-1}
    +
(\frac{1}{J_0^t}F_0^t)\sum_{i=1}^N
  \frac{d}{dt}\mathsf{u}_i(t)
    {\bld \psi}_i^0
  \circ(\bld \phi_t)^{-1}
    \\
  \nonumber
    = &\;
\left(
  \nabla_{\mathbf x}\bld \omega^f
  -(\mathrm{div}_{\mathbf x}\bld \omega^f)\mathbf I
\right)
\frac{1}{J_0^t}F_0^t
  \sum_{i=1}^N
    \mathsf{u}_i(t)
    {\bld \psi}_i^0 
  \circ(\bld \phi_t)^{-1}
    +
(\frac{1}{J_0^t}F_0^t)\sum_{i=1}^N
  \frac{d}{dt}\mathsf{u}_i(t)
    {\bld \psi}_i^0
  \circ(\bld \phi_t)^{-1}
%\\
%    = &\;
%\left(
%  \nabla_{\mathbf x}\bld \omega^f
%-\mathrm{div}_{\mathbf x}\bld \omega^f\right)
%\bld u_h^f    
%+
%(\frac{1}{J_0^t}F_0^t)\sum_{i=1}^N
%  \frac{d}{dt}\mathsf{u}_i(t)
%    {\bld \psi}_i^0
%  \circ(\bld \phi_t)^{-1}
\end{align}
where the first term in the above right hand side is due to the use 
of the (time-dependent) Piola mapping; see \cite{Neunteufel21} for a similar derivation.
Now time derivative only appears in the coefficients $\mathsf{u}_i$ in the above
right hand side, which can be discretized using a standard ODE solver. 
We use the second-order backward difference formula (BDF2) in this work.

To simplify notation, we denote 
$\bld u_h^{f,n}$ as the solution $\bld u_h^f$ with 
coefficients $\mathsf{u}_i$ evaluated at time 
$t^n=n\Delta t$, where $\Delta t$ is the time step size, i.e., 
\[
  \left.  \bld u_h^{f,n}\right|_{K^t} = 
    \frac{1}{J_0^t}F_0^t\sum_{i=1}^N
    \mathsf{u}_i(t^n)
    {\bld \psi}_i^0 \circ(\bld \phi_t)^{-1}, \quad \forall t\in[0,T].
\] 
%Note that with the above notation, the function $\bld u_h^n$ is understood to be
%defined on the fluid domain $\Omega_t^f$ for any time $t\in[0,T]$.
Then, the BDF2 time discretization of the term in \eqref{ale-time} at time $t^n$
is 
\begin{align}
  \label{ale-bdf2}
%  \left.\frac{\partial \bld u_h^f}{\partial t}(t^n)\right|_{\mathbf x_0}
%    \approx &\;
    D_t^2\bld u_h^{f,n}:=
\frac{
  1.5 \bld u_h^{f,n}-2\bld u_h^{f,n-1}+0.5\bld u_h^{f,n-2}
}{\Delta t}
+\left(
  \nabla_{\mathbf x}\bld \omega^{f,n}
  -(\mathrm{div}_{\mathbf x}\bld \omega^{f,n})\mathbf I
\right)\bld u_h^{f,n},
  \end{align}
  where $\bld \omega^{f,n}=\bld\omega^f(t^n)$ is the mesh velocity at time 
  $t^n$.
%To further simplify notation, we denote 
%\[
%  {D_t^2\bld u_h^n}:= 
%\frac{
% 1.5 \bld u_h^n-2\bld u_h^{n-1}+0.5\bld u_h^{n-2}
%}{\Delta t}.
%\] 

 The fully discrete ALE-divergence-free HDG scheme with BDF2 time stepping is
 obtained from the semidiscrete scheme \eqref{hdg-fluid} by 
replacing the term $
\left.\frac{\partial \bld u_h^f}{\partial t}(t^n)\right|_{\mathbf x_0}
  $ with $D_t^2\bld u_h^{f,n}$ in \eqref{ale-bdf2}, and
 evaluating all the other terms %in equations \eqref{hdg-fluid} 
 at
 time level $t^n$:
 %now reads  
 %as follows:
 Given $\bld u_h^{f,n-2}, \bld u_h^{f,n-1}\in\Vhf{k}$ at time $t^{n-2}$ and 
 $t^{n-1}$, find
$
(p_h^{f,n},
\bld \epsilon_h^{f,n},
\bld u_h^{f,n},
\widetilde{\sigma}_h^{f,n},
\widetilde{\bld u}_h^{f,n})
 \in \Whf{k-1}\times\Shf{k}\times\Vhf{k}\times\tWhf{k}
 \times \tVhf{k}
 $ 
at time $t^n$
with $\widetilde{\bld u}_h^{f,n}|_{\Gamma_h^f}=0$
such that 
\begin{align}
  \label{hdg-fluidF}
  \mathcal{M}_h^f\left(
%    \frac{1.5\bld u_h^n-2\bld u_h^{n-1}
%   +0.5\bld u_h^{n-2}}{\Delta t}
%+\left(
%  \nabla_{\mathbf x}\bld \omega^{f,n}
%  -(\mathrm{div}_{\mathbf x}\bld \omega^{f,n})\mathbf I
%\right)\bld u_h^{f,n}
    D_t^2\bld u_h^{f,n}, \bld v_h^f\right) %\quad\quad&\\\nonumber
  +
  \mathcal{C}_h^f\left(\bld u_h^{f,n}-\bld \omega^{f,n}; (\bld
    u_h^{f,n},\widetilde{\bld u}_h^{f,n}),
  (\bld v_h^f,\widetilde{\bld v}_h^f)\right)\quad\quad&\\\nonumber
  +\mathcal{A}_h^f\left(
    (p_h^{f,n}, \bld \epsilon_h^{f,n}, \bld u_h^{f,n},
    \widetilde{\sigma}_h^{f,n}, \widetilde{\bld u}_h^{f,n}
    ),
  (q_h^f,\bld G_h^f, \bld v_h^f,
    \widetilde{\tau}_h^f, \widetilde{\bld v}_h^f
)\right)&=\; \mathcal{F}_h^f(\bld v_h^f).
\end{align}
%\begin{subequations}
%  \label{hdg-fluidF}
%  \begin{align}
%  \label{hdg-f1F}
%  \intV{
%    \rho^f
%    \frac{1.5\bld u_h^n-2\bld u_h^{n-1}
%   +0.5\bld u_h^{n-2}}{\Delta t}
%   +(\nabla_{\mathbf x}\bld \omega^n)\bld u_h^n
%  }{\bld v_h^f}{f}\\
%  +\intV{
%    -\rho^f(\bld u_h^n-\bld \omega^n)\otimes
%    \bld u_h^n+\bld \sigma_h^n
%  }{\nabla_{\mathbf x}\bld v_h^f}{f}
%  -\intF{\widetilde{\bld {Flux}}^n}{\;\bld v_h^f}{f}&\;=\;
%  \intV{\rho^f\bld f^f(t^n)}{\bld v_h^f}{f}, \nonumber\\
%  \label{hdg-f2F}
%  2\mu^f\intV{\bld \epsilon_h^n-\mathbf D_{\mathbf x}(\bld u_h^n)}{\bld G_h^f}{f}
%  +2\mu^f\intF{\tng(\bld u_h^n-\widetilde{\bld u}_h^n)}{\bld G_h^f\bld n}{f}
%                                                  &\;=\;0,\\
%  \label{hdg-f3F}
%  \intV{\mathrm{div}_{\mathbf x}(\bld u_h^n)}{q_h^f}{f} &\;=\;0,\\
%  \label{hdg-f4F}
%  \intF{\bld u_h^n\cdot\bld n}{\widetilde\tau_h^f}{f} &\;=\;0,\\
%  \label{hdg-f5F}
%  \intF{\widetilde{\bld{Flux}}^n}{\;\tng(\widetilde{\bld v}_h^f)}{f} &\;=\;0,
%  \end{align}
%\end{subequations}
  for all 
$
(q_h^f,
 \bld G_h^f,
 \bld v_h^f,
 \widetilde{\tau}_h^f,
 \widetilde{\bld v}_h^f)
 \in \Whf{k-1}\times\Shf{k}\times\Vhf{k}\times\tWhf{k}
 \times \tVhf{k}
 $ with $\widetilde{\bld v}_h^f|_{\Gamma_h^f}=0$.
%where 
%$ 
%\widetilde{\bld{Flux}}^n
%$  is the numerical flux \eqref{flux} evaluated at time $t^n$. 

\begin{remark}[Comment on fully discrete stability]\label{stability}
%We have used a fully implicit time discretization in the scheme \eqref{hdg-fluidF}.
Unlike the static mesh case where a fully  implicit BDF2 scheme leads to
unconditional energy stability, the ALE moving mesh scheme \eqref{hdg-fluidF} 
can be only shown to be
{\it conditionally stable} with maximum allowable time step size $\Delta t$ depending
on the mesh velocity $\bld \omega^f$. We omit the proof of this result 
and refer interested reader to the work \cite{Nobile01} for a similar derivation.
\end{remark}

\begin{remark}[Semi-implicit convection treatment]
  The scheme \eqref{hdg-fluidF} leads to a nonlinear system due to 
  a fully implicit treatment of the nonlinear convection term.
  %which can be  solved via the Newton's method.
  A slightly cheaper method is to treat the convection term semi-implicitly
  by replacing the convection velocity term $\bld u_h^{f,n}-\bld \omega^{f,n}$
  in equation \eqref{hdg-fluidF}  
  %and in the numerical flux term 
  %$\widetilde{\bld {Flux}}^n$
  with 
 the following second-order extrapolation
 \[
   (2\bld u_h^{f,n-1}-\bld u_h^{f,n-2})-\bld \omega^{f,n}.
 \]
 This leads to a linear scheme 
 with a similar stability property as the original nonlinear scheme \eqref{hdg-fluidF}. 
\end{remark}

\section{The $H(\mathrm{curl})$-conforming HDG scheme for nonlinear elasticity}
\label{sec:elas}
In this section, we introduce the 
$H(\mathrm{curl})$-conforming HDG scheme 
for nonlinear elasticity in the Lagrangian framework.

\subsection{The equations of elastodynamics}
We consider the following nonlinear elasticity problem 
with a hyperelastic material in a 
Hu-Washizu formulation \cite{Washizu75} on the fixed 
reference domain $\Omega^s\in\mathbb{R}$:
\begin{subequations}
\label{elas-eq}
  \begin{alignat}{2}
\label{elas-eq0}
 \frac{\partial }{\partial t}\bld d
-\bld u^s
    =&\; 0, \quad&& \text{in} \;\Omega^s\times [0,T]\\
\label{elas-eq1}
\rho^s \frac{\partial }{\partial t}\bld u^s
-\mathrm{div}\bld P
    =&\; \rho^s\bld f^s, \quad&& \text{in} \;\Omega^s\times [0,T]\\
\label{elas-eq2}
\bld P - \frac{\partial \Psi(\bld F)}{\partial \bld F}
    =&\; 0, \quad&& \text{in} \;\Omega^s\times [0,T]\\
\label{elas-eq3}
\bld F - \nabla\bld d -\mathbf I
    =&\; 0, \quad&& \text{in} \;\Omega^s\times [0,T]
\end{alignat}
\end{subequations}
where 
$\bld d$ is the structure displacement field,
$\bld u^s$ is the structure velocity (on reference domain),
$\bld F$ is the deformation gradient, 
$\bld P$ is 
the first Piola-Kirchhoff stress tensor, and
$\Psi(\bld F)$ is the hyperelastic potential, where we use the following 
Saint Venant-Kirchhoff model in this work
\[
  \Psi(\bld F):= \frac{\lambda^s}{2}\mathrm{tr}(\bld E)^2 +\mu^s\bld E:\bld E,
  \quad\quad \bld E:=\frac12(\bld F^T\bld F-\mathbf I),
\] 
with $\lambda^s, \mu^s$ being the two Lam\'e parameters.
In this case, we have
\[
  \frac{\partial \Psi}{\partial \bld F} = \bld F(\lambda^s \mathrm{tr}(\bld
  E)\mathbf I
  +2\mu^s\bld E).
\]
%The deformation configuration is simply 
%\[
%  \{\mathbf x_0+\bld d(\mathbf x_0): \quad \mathbf x_0\in \Omega^s\}.
%\] 

%We also introduce the following 
%structure velocity
%\[
%  \bld u^s:=\frac{\partial }{\partial t}\bld d.
%\] 

\subsection{Mesh and finite element spaces}
\subsubsection{Mesh and mappings}
We consider a similar mesh setting as that in Section \ref{sec:fluid}, except
that the structure domain and the associated mesh
do not move over time.
In particular, $\Th^{s}:=\{K\}$ is a conforming simplicial
triangulation of the structure
 domain $\Omega^s\subset \mathbb{R}^d$, 
where the element $K=\Phi_K(\widehat K)$ is a mapped
simplex from the reference simplex element 
$\widehat K$,
%We also denote
$\partial \Th^{s}:=\{\partial K:=\Phi_K(\partial \widehat K)\}$ is  the collection of element
boundaries of the mesh $\Th^{s}$, and 
$\Eh^{s}:=\{E:=\Psi_E(\widehat E) \}$ is the mesh skeleton, 
where $\Psi_E:\widehat E\rightarrow E$ 
is the mapping between the reference surface element $\widehat E$ and the 
physical facet $E$.
%We need the volume and surface Jacobian matrices and their determinant:
%\[
%  F_K := \nabla_{\widehat {\mathbf x}}\Phi_K
%  \in\mathbb{R}^{d\times
%  d},\quad 
%  F_E:=\nabla_{\widehat{\mathbf z}}\Psi_E
%  \in\mathbb{R}^{d\times
%  (d-1)},\quad 
%  J_K := \mathrm{det}(F_K),\quad
%  {J}_E := \sqrt{\mathrm{det}\left((\widehat F_E)^TF_E\right)}. 
%\]

\subsubsection{The finite element spaces}
Again, we introduce both finite element spaces on the structure mesh $\Th^s$, and 
the mesh skeleton $\Eh^s$:
\begin{subequations}
  \label{spaces-S}
\begin{align}
  \bld \Sigma_h^{k, s}:=
                     &\;\{\bld \sigma \in [L^2(\Omega^s)]^{d\times d}: \;\;
    \bld \sigma|_{K} = \widehat{\bld \sigma}\circ (\Phi_K)^{-1},\;\; 
    \widehat{\bld \sigma}\in [\pol^k(\widehat K)]^{d\times d}, 
\;\forall K\in \Th^{s}\},\\ 
  \bld V_h^{k,s}:=&\;\left\{\bld v\in H(\mathrm{curl};\Omega^s): \;\;
    \bld v|_{K} = F_K^{-T} \left(\widehat {\bld
      v}\circ(\Phi_K)^{-1}\right), 
      \;\; \widehat{\bld v}\in [\mathcal{P}^k(\widehat K)]^d, \;\forall
    K\in \Th^{s}\right\}, \\
        \widetilde{\bld V}_h^{k,s}:=&\;\{\widetilde{\bld v}
          \in [L^2(\Eh^{s})]^d: \;\;\widetilde{\bld v}|_{E}
          = 
          \mathrm{nrm}\left(
          \frac{1}{J_E}F_E \,\widehat {\bld
        v}\circ(\Psi_E)^{-1}\right),
      \;\;\widehat {\bld v}\in
        [\pol^k(\widehat E)]^d,\quad \forall E\in\Eh^{s}\},
    \end{align}
where $\nrm(\bld v)|_{E} := (\bld v\cdot \bld n_E)\bld n_E$
denotes the normal component of the vector $\bld v$ on the facet $E$, whose
normal direction is $\bld n_E$.
    Note that 
the standard pull-back mapping is used to define the
    the non-symmetric and discontinuous tensor space $\Shs{k}$,
    which will be used to approximate the tensor fields $\bld F/\bld P$, 
    the covariant mapping is used to define the 
$H(\mathrm{curl})$-conforming 
    vector space $\Vhs{k}$, which 
    will be used to approximate  the structure displacement and velocity,
    and the Piola mapping is used to define the hybrid space
    $\tVhs{k}$, which 
    will be used to approximate the normal component of 
    structure displacement and velocity on the mesh skeleton.
  \end{subequations}

\subsection{The $H(\mathrm{curl})$-conforming HDG scheme: spatial discretization}
In this subsection, we focus on the spatial discretization of the 
equations \eqref{elas-eq} with 
homogeneous Dirichlet boundary conditions 
\begin{align}
  \label{dirichletS}
  \bld d(\mathbf x,t)= \bld u^s(\mathbf x,t) = 0, \quad  \forall \mathbf x\in \partial\Omega^s, \quad t\in[0,T].
\end{align}

We use polynomials of degree $k\ge 1$ for all the variables.
The spatial HDG discretization reads as follows: 
Find $
(\bld P_h,
 \bld F_h,
 \bld d_h,
 \bld u_h^s,
 \widetilde{\bld d}_h,
 ,\widetilde{\bld u}_h^s)
 \in \Shs{k}\times\Shs{k}\times\Vhs{k}\times\Vhs{k}
 \times \tVhs{k}\times \tVhs{k}
 $ with 
 $
   \tng({\bld d}_h)|_{\partial\Omega^s} =
   %=\tng({\bld u}_h^s)|_{\partial\Omega^s}=0,\quad 
     \nrm(\widetilde{\bld
   d}_h)|_{\partial\Omega^s}=0,
   %=\nrm(\widetilde{\bld u}_h^s)|_{\partial\Omega^s}=0,
 $ 
such that 
\begin{subequations}
  \label{hdg-elas}
  \begin{align}
    \label{elas-vel}
    \intVS{\frac{\partial \bld d_h}{\partial t}-\bld u_h^s}{\bld v_h^s}
   + \intFS{\frac{\partial \widetilde{\bld d}_h}{\partial t}-
   \widetilde{\bld u}_h^s}{\widetilde{\bld v}_h^s}&=0,\\
  \label{hdg-s1}
  \intVS{
  \rho^s\frac{\partial \bld u_h^s}{\partial t}
  }{\bld \xi_h}
  +\intVS{\bld P_h}{\nabla\bld \xi_h}
  -\intFS{\widetilde{\bld P_h}\bld n}{\;\nrm(\bld \xi_h)}&\;=\;
  \intVS{\rho^s\bld f^s}{\bld \xi}, \\
  \label{hdg-s2}
  \intVS{
    \frac{\partial \Psi(\bld F_h)}{\partial \bld F} -   \bld P_h
  }{\bld G_h}
                                                  &\;=\;0,\\
  \label{hdg-s3}
  \intVS{
    \bld F_h - \nabla \bld d_h-\mathbf I
  }{\bld Q_h}
  +\intFS{\;\nrm(\bld d_h-\widetilde{\bld d}_h)}{{\bld Q_h}\bld n}&\;=\;0,\\
  \label{hdg-s4}
  \intFS{\widetilde{\bld{P}_h}\bld n}{\;\nrm(\widetilde{\bld \xi}_h)} &\;=\;0,
  \end{align}
  for all 
$
(\bld Q_h,\bld G_h, \bld \xi_h, \bld v_h^s, 
\widetilde{\bld \xi}_h, \widetilde {\bld v}_h^s
 )
 \in \Shs{k}\times\Shs{k}\times\Vhs{k}\times\Vhs{k}\times\tVhs{k} \times\tVhs{k}
 $ with 
 $
   \tng({\bld \xi}_h)|_{\partial\Omega^s}
 =\nrm(\widetilde{\bld
   \xi}_h)|_{\partial\Omega^s}=0,
 $
 where
 the numerical flux is 
 \begin{align}
   \label{fluxS}
   \widetilde{\bld{P}_h}\bld n:= &\;
   \bld P_h\bld n - 
   \alpha_s
 % 2\mu^s
   \,\nrm(\bld d_h-\widetilde{\bld d}_h),
 \end{align}
with the stabilization parameter taken to be $\alpha_s=2\mu^s$.
%where the structure velocities $\bld u_h^s\in \Vhs{k}$
%and $\widetilde{\bld u}_h^s\in \tVhs{k}$ are given as  
% \begin{align}
%   \label{elas-vel}
%\bld u_h^s=\frac{\partial \bld d_h}{\partial t},\quad
% \widetilde{ \bld u}_h^s = \frac{\partial \widetilde{\bld d}_h}{\partial t}.
% \end{align}
\end{subequations}
Note that equation \eqref{elas-vel} implies that 
\begin{align}
  \label{vel-disp}
  \frac{\partial\bld d_h}{\partial t}=\bld u_h^s, \text{ and }
  \frac{\partial\widetilde{\bld d}_h}{\partial t}=\widetilde{\bld u}_h^s.
\end{align}

To further simplify notation, 
we introduce the following operators:
%associated with the scheme \eqref{hdg-elas}:
\begin{subequations}
\label{operators-s}
\small
\begin{align}
%  \mathcal{B}_h^s\left((\bld u_h^s, \widetilde{\bld u}_h^s),
%  (\bld v_h^s, \widetilde{\bld v}_h^s)\right) :=&\;
%  \intVS{
%  \bld u_h^s
%  }{\bld v_h^s}
%+
%  \intFS{
%    \widetilde{ \bld u}_h^s
%  }{\widetilde{\bld v}_h^s},\\
  \mathcal{M}_h^s(\bld u_h^s, \bld \xi_h) :=&\;
  \intVS{
  \rho^s
  \bld u_h^s
  }{\bld \xi_h},\\
  \mathcal{A}_h^s\left(
    (\bld P_h, \bld F_h, \bld d_h,
    \widetilde{\bld d}_h),
    (\bld Q_h, \bld G_h, \bld \xi_h,
    \widetilde{\bld \xi}_h)\right)
  :=& \;
  \intVS{\bld P_h}{\nabla\bld \xi_h}
  -\intFS{\widetilde{\bld P_h}\bld n}{\;\nrm(\bld \xi_h
-\widetilde{\bld \xi}_h
)}
%+
%  \intVS{\rho^s\bld f^s}{\bld \xi}, 
\\\nonumber
    & \hspace{-0.25\textwidth}+ \intVS{
      \frac{\partial \Psi(\bld F_h)}{\partial \bld F} -   \bld P_h
  }{\bld G_h}
  +\intVS{
    \bld F_h - \nabla \bld d_h-\mathbf I
  }{\bld Q_h}
  +\intFS{\;\nrm(\bld d_h-\widetilde{\bld d}_h)}{{\bld Q_h}\bld n},\\
  \mathcal{F}_h^s(\bld \xi):=&\;
  \intVS{\rho^s\bld f^s}{\bld\xi_h}.
\end{align}
\end{subequations}
Then the equations \eqref{hdg-s1}--\eqref{hdg-s4} can be expressed as the following
compact form:
\begin{align}
  \label{hdg-elas-c}
  \mathcal{M}_h^s\left(\frac{\partial \bld u_h^s}{\partial t},
  \bld \xi_h\right) 
  +\mathcal{A}_h^s\left(
    (\bld P_h, \bld F_h, \bld d_h,
    \widetilde{\bld d}_h),
    (\bld Q_h, \bld G_h, \bld \xi_h,
    \widetilde{\bld \xi}_h)\right)
&=\; \mathcal{F}_h^s(\bld \xi_h).
\end{align}

We have the following energy stability result.
\begin{theorem}
  \label{thm:elas}
  Let $
(\bld P_h,
 \bld F_h,
 \bld d_h,
 \bld u_h^s,
 \widetilde{\bld d}_h,
 ,\widetilde{\bld u}_h^s)
 \in \Shs{k}\times\Shs{k}\times\Vhs{k}\times\Vhs{k}
 \times \tVhs{k}\times \tVhs{k}
 $ 
  be the solution to the scheme \eqref{hdg-elas}.
 Then the following energy identity holds:
\[
  \frac{d}{dt}\mathsf{E}_h^s
%  \intVS{
%  \rho^s\bld u_h^s}{\bld u_h^s}
%+
%2\mu^f\intV{\bld \epsilon_h^f}{\bld \epsilon_h^f}{f}
%+\intF{\gamma_h\left|\tng(\bld u_h^f-\widetilde{\bld
%    u}_h^f)\right|^2}{1}{f}
  =\intVS{\rho^f\bld f^s}{\bld u_h^s},
\] 
where the elastic energy 
\begin{align}
  \label{elas-ener}
\mathsf{E}_h^s:=
\frac12  \intVS{
  \rho^s\bld u_h^s}{\bld u_h^s}
  +\intVS{\Psi(\bld F_h)}{1}
  +\frac12\intFS{\alpha^s|\nrm(\bld d_h-\widetilde{\bld d}_h)|^2}{1}
\end{align}
\end{theorem}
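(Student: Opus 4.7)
The plan is to mirror the energy-identity proof of Theorem \ref{thm:stability}: test the scheme \eqref{hdg-elas} with the trial functions themselves (or with time derivatives of trial functions where needed), then exploit the structure of the numerical flux \eqref{fluxS} so that the viscous-type boundary contributions collapse into the time derivative of the stabilization term in $\mathsf{E}_h^s$. Concretely, I would assemble three ``ingredients''. First, take $\bld\xi_h=\bld u_h^s$ in \eqref{hdg-s1} and $\widetilde{\bld\xi}_h=\widetilde{\bld u}_h^s$ in \eqref{hdg-s4}; the mass term produces $\tfrac12\tfrac{d}{dt}\intVS{\rho^s\bld u_h^s}{\bld u_h^s}$, while the remainder leaves us with $\intVS{\bld P_h}{\nabla\bld u_h^s}-\intFS{\widetilde{\bld P_h}\bld n}{\nrm(\bld u_h^s-\widetilde{\bld u}_h^s)}$ on the left-hand side, plus the forcing on the right.

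Second, differentiate \eqref{hdg-s3} in time (this is legitimate since the Lagrangian mesh $\Th^s$ is fixed, so the spatial operators and the bilinear pairings are time-independent) and invoke the velocity--displacement identity \eqref{vel-disp} to replace $\partial_t\bld d_h$ by $\bld u_h^s$ and $\partial_t\widetilde{\bld d}_h$ by $\widetilde{\bld u}_h^s$. Choosing the test function $\bld Q_h=\bld P_h\in\Shs{k}$ then yields
\begin{equation*}
\intVS{\partial_t\bld F_h}{\bld P_h}=\intVS{\nabla\bld u_h^s}{\bld P_h}-\intFS{\nrm(\bld u_h^s-\widetilde{\bld u}_h^s)}{\bld P_h\bld n}.
\end{equation*}
Third, choose $\bld G_h=\partial_t\bld F_h\in\Shs{k}$ in \eqref{hdg-s2}; by the chain rule applied to the hyperelastic potential, $\intVS{\partial_t\bld F_h}{\bld P_h}=\intVS{\partial\Psi(\bld F_h)/\partial\bld F}{\partial_t\bld F_h}=\tfrac{d}{dt}\intVS{\Psi(\bld F_h)}{1}$, which identifies the second term of $\mathsf{E}_h^s$.

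Substituting this back into the first ingredient, the volume terms $\intVS{\bld P_h}{\nabla\bld u_h^s}$ cancel, and what remains on the boundary is $\intFS{(\bld P_h\bld n-\widetilde{\bld P_h}\bld n)}{\nrm(\bld u_h^s-\widetilde{\bld u}_h^s)}$. Using the flux definition \eqref{fluxS}, $\bld P_h\bld n-\widetilde{\bld P_h}\bld n=\alpha_s\,\nrm(\bld d_h-\widetilde{\bld d}_h)$, and since $\bld n$ is time-independent one has $\nrm(\bld u_h^s-\widetilde{\bld u}_h^s)=\partial_t\nrm(\bld d_h-\widetilde{\bld d}_h)$ by \eqref{vel-disp}; hence this term equals $\tfrac12\tfrac{d}{dt}\intFS{\alpha^s|\nrm(\bld d_h-\widetilde{\bld d}_h)|^2}{1}$, which is exactly the third contribution to $\mathsf{E}_h^s$. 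Summing the three contributions produces the claimed identity.

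The only mildly delicate point—and what I expect to be the main check rather than a true obstacle—is the admissibility of $\partial_t\bld F_h$ and $\bld P_h$ as test functions in the differentiated equations: both belong to the non-conforming tensor space $\Shs{k}$ at each fixed time, so this is immediate. The boundary conditions $\tng(\bld d_h)|_{\partial\Omega^s}=\nrm(\widetilde{\bld d}_h)|_{\partial\Omega^s}=0$ (and the corresponding constraints on $\bld u_h^s,\widetilde{\bld u}_h^s$ through \eqref{vel-disp}) ensure that no extraneous external-boundary contributions arise, so the above cancellation really closes.
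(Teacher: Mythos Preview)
Your proposal is correct and follows essentially the same approach as the paper: test \eqref{hdg-s1} and \eqref{hdg-s4} with $\bld u_h^s$ and $\widetilde{\bld u}_h^s$, test \eqref{hdg-s2} with $\partial_t\bld F_h$, differentiate \eqref{hdg-s3} in time and test with $\bld P_h$, then combine using the flux definition \eqref{fluxS} and the relation \eqref{vel-disp}. Your write-up even spells out more explicitly than the paper how the residual boundary term becomes the time derivative of the stabilization energy.
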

\begin{proof}
  Taking the test function $\bld \xi_h=\bld u_h^s$ in \eqref{hdg-s1}
  and $\widetilde{\bld \xi}_h=\widetilde{\bld u}_h^s$ in \eqref{hdg-s4}
  and adding, we get 
  \[
   \frac12 \frac{d}{dt}\mathcal{M}_h^s(\bld u_h^s, \bld u_h^s)
    +
  \intVS{\bld P_h}{\nabla\bld u_h^s}
  -\intFS{\widetilde{\bld P_h}\bld n}{\;\nrm(\bld u_h^s-\widetilde{\bld u}_h^s)}
  \;=\;
  \intVS{\rho^s\bld f^s}{\bld u_h^s}.
  \] 
  Taking the test function $\bld G_h=\frac{\partial \bld F_h}{\partial t}$ in
  \eqref{hdg-s2}, we get
  \[
    \frac{d}{dt}\intVS{\Psi(\bld F_h)}{1}
    -
  \intVS{\bld P_h}{
\frac{\partial \bld F_h}{\partial t}
}=0.
  \]
  Taking time derivative of the equation \eqref{hdg-s3} and then taking 
the test function  $\bld Q_h=\bld P_h$ and using 
the relation \eqref{vel-disp}, we get
  \[
  \intVS
  {\frac{\partial \bld F_h}{\partial t}-\nabla\bld u_h^s}
  {\bld P_h}
  +\intFS{\;\nrm(\bld u_h^s-\widetilde{\bld u}_h^s)}{{\bld P_h}\bld n}=0.
  \]
  Adding the above equations and using the definition of the numerical flux 
  \eqref{fluxS}, we get the 
equality in Theorem \ref{thm:elas}.
\end{proof}
 \begin{remark}[Relation with the TDNNS method]
We specifically mention that the use of an 
$H(\mathrm{curl})$-conforming 
space $\Vhs{k}$ for
the displacement field 
   is inspired by the recent TDNNS method 
 \cite{PechsteinSchoberl11}, where good performance
 for nonlinear elastostatics 
 was obersved in
 \cite{neunteufel2020threefield}
 for several numerical examples
including  nearly incompressible materials and thick/thin structures.
A major difference 
between the TDNNS method \cite{neunteufel2020threefield}
and the current HDG method \eqref{hdg-elas} is on the choice of 
the finite element spaces for the tensor fields $\bld P_h$ and $\bld F_h$.
We use a simple tensor space with standard pull-back mappings from the reference
element for both tensor fields, and use the (positive) HDG stabilization term in the
numerical flux to enforce %/enhance 
the stability of the scheme.
On the other hand, the TDNNS method \cite{neunteufel2020threefield} use 
a doublly Piola mapping for the symmetric part of the stress $\bld P_h$, and 
a doublly covariant mapping for the symmetric part of the deformation gradient 
$\bld F_h$ where the stabilization parameter can be set to be zero; see more details in \cite{neunteufel2020threefield}.

The TDNNS method was known to be particularly suitable for thin structures 
where anisotripic elements are needed 
\cite{Pechstein12,Pechstein17}.
%where the key is the use of an $H(\mathrm{curl})$-conforming displacement approximation.
We expect similar results for the current 
 HDG scheme \eqref{hdg-elas}
due to the use of an $H(\mathrm{curl})$-conforming displacement approximation.
%Our numerical results (not reported here)  also agree well with those obtained
%with the TDNNS methods for the nonlinear elastostatics examples considered in \cite{neunteufel2020threefield}.
\end{remark}

\subsection{The $H(\mathrm{curl})$-conforming HDG scheme: temporal discretization}
Since the semidiscrete scheme \eqref{hdg-elas} is constructed in the Lagrangian
framework on a fixed domain, a standard method-of-lines approach can be readily
used to discretize the time derivatives.
Here we again use the BDF2 method as an example.

The fully discrete scheme is obtained from \eqref{hdg-elas} by replacing the 
time derivative terms in \eqref{hdg-s1}  
and \eqref{elas-vel} 
by the following BDF2 approximation:
\begin{subequations}
  \label{bdf2-s}
  \begin{align}
  %\frac{\partial \bld u_h^s}{\partial t}(t^n)
  %\approx
  \label{bdf2-s1}
  \mathsf{D}_t^2\bld u_h^{s,n}:=
  &\; \frac{1.5\bld u_h^{s,n}-2\bld u_h^{s,n-1}+0.5\bld u_h^{s,n-2}}{\Delta
  t},\\
%  \frac{\partial \bld d_h^s}{\partial t}(t^n)
%    \approx
  \mathsf{D}_t^2\bld d_h^{n}:=
  &\; \frac{1.5\bld d_h^{n}-2\bld d_h^{n-1}+0.5\bld d_h^{n-2}}{\Delta
  t},\\
%  \frac{\partial \widetilde{\bld d}_h^s}{\partial t}(t^n)
%      \approx
  \mathsf{D}_t^2\widetilde{\bld d}_h^{n}:=
  &\; \frac{1.5\widetilde{\bld d}_h^{n}-2\widetilde{\bld d}_h^{n-1}+0.5\widetilde{\bld d}_h^{n-2}}{\Delta
  t},
\end{align}
\end{subequations}
and evaluating all the other terms at time level $t^n$:
 Given 
 $\bld u_h^{s,n-2}, \bld d_h^{n-2}\in\Vhs{k}$ and 
 $\widetilde{\bld d}_h^{n-2}\in\tVhs{k}$ at time $t^{n-2}$, and 
 $\bld u_h^{s,n-1}, \bld d_h^{n-1}\in\Vhs{k}$ and 
 $\widetilde{\bld d}_h^{n-1}\in\tVhs{k}$ at time $t^{n-1}$, 
 find
$
(\bld P_h^{n},
\bld F_h^{n},
\bld u_h^{s,n},
\widetilde{\bld u}_h^{s,n})
 \in \Shs{k}\times\Shs{k}\times\Vhs{k}\times\tVhs{k}
 $ 
at time $t^n$
with $
\tng({\bld u}_h^{s,n})|_{\Gamma_h^f}=
\nrm(\widetilde{\bld u}_h^{s,n})|_{\Gamma_h^f}=0$
such that 
\begin{align}
  \label{hdg-elasF}
  \mathcal{M}_h^s\left(\mathsf{D}_t^2\bld u_h^{s,n},
  \bld \xi_h\right) 
  +\mathcal{A}_h^s\left(
    (\bld P_h^n, \bld F_h^n, \bld d_h^n,
    \widetilde{\bld d}_h^n),
    (\bld Q_h, \bld G_h, \bld \xi_h,
    \widetilde{\bld \xi}_h)\right)
&=\; \mathcal{F}_h^s(\bld \xi_h),
\end{align}
  for all 
$
(\bld Q_h,\bld G_h, \bld \xi_h, \bld v_h^s, 
\widetilde{\bld \xi}_h, \widetilde {\bld v}_h^s
 )
 \in \Shs{k}\times\Shs{k}\times\Vhs{k}\times\Vhs{k}\times\tVhs{k} \times\tVhs{k}
 $ with 
 $
   \tng({\bld \xi}_h)|_{\partial\Omega^s}
 =\nrm(\widetilde{\bld
   \xi}_h)|_{\partial\Omega^s}=0,
 $
where
%We leave out the
%detailed formulation.
%In particular, we mention that,
%for the resulting fully discrete scheme, 
%the equation \eqref{elas-vel} implies that
the displacment fields satisfy
\begin{align}
  \label{disp-X}
  \bld d_h^n=\frac43\bld d_h^{n-1}-\frac13\bld d_h^{n-2}+\Delta t\bld u_h^{s,n},
  \quad
  \widetilde{\bld d}_h^n=\frac43\widetilde{\bld d}_h^{n-1}-
  \frac13\widetilde{\bld d}_h^{n-2}+\Delta t\widetilde{\bld u}_h^{s,n}.
\end{align}
%We then apply the Newton's method to solve the nonlinear system 
%\eqref{hdg-s1}--\eqref{hdg-s4}
%for the variables $\bld P_h^n, \bld F_h^n, \bld u_h^n$, and $\widetilde{\bld u}_h^n$, 
%with the displacement fields substituted by the above formulas, where static
%condensation shall be applied and the global linear system in each Newton
%iteration consists of only degrees of freedom for the velocities $\bld u_h^n$ 
%and $\widetilde{\bld u}_h^n$ on the mesh skeleton.
%\begin{remark}{Fully discrete stability}
%
%\end{remark}

\section{
  HDG for FSI: coupling and mesh movement}
\label{sec:ale}
In this section, we introduce an HDG method to solve the FSI problem by 
combing the ALE-divergence-free HDG scheme for fluids in Section \ref{sec:fluid}
and 
the $H(\mathrm{curl})$-conforming HDG scheme for structure in Section
\ref{sec:elas}. We mainly focus on the proper treatment of the fluid-structure
interface, and the construction of the ALE map in the fluid domain.
\subsection{The FSI problem}
We consider the interaction between an incompressible, viscous fluid and
an elastic structure.
We use the same notation as the previous two sections.
In particular, the Navier-Stokes equations \eqref{ns-eq-reform} is considered 
on  the (moving) fluid domain %at time $t$,
$\Omega^f_t$, 
and the equations of nonlinear hypreelasticity \eqref{elas-eq} is considered 
on the reference structure domain $\Omega^s$.
We denote $\Gamma_0 = \Omega_0^f\cap \Omega^s$ as the fluid-structure interface in the
initial configuration, and denote $\Gamma_t=\bld \phi_t(\Gamma_0)$ as the
deformed interface.
The equations \eqref{ns-eq-reform} and \eqref{elas-eq} are equiped with the
following coupling and boundary conditions:
\begin{subequations}
  \label{fsi-bc}
  \begin{alignat}{2}
  \label{fsi-ic1}
  \bld u^f-\bld u^s\circ(\bld\phi_t)^{-1} = &\;0 &&\quad\text{ on } \Gamma_t,\\
  \label{fsi-ic2}
  \bld \sigma^f\bld n^f
  +(\bld P\bld n^s)\circ(\bld\phi_t)^{-1} = &\;0 &&\quad\text{ on } \Gamma_t,\\
%  (\bld \sigma^f\circ\bld\phi_t) \frac{(F_0^t)^{-T}\bld n_0^f}{
%\|(F_0^t)^{-T}\bld n_0^f\|
%}-\bld P\bld n^f_0 = &\;0 &&\quad\text{ on } \Gamma_0,\\
  \label{fsi-bc1}
  \bld u^f = &\;0 &&\quad\text{ on } \partial\Omega^f_t\backslash\Gamma_t,\\
  \label{fsi-bc2}
  \bld d = \bld u^s=&\; 0 &&\quad\text{ on } \partial\Omega^s\backslash\Gamma_0,
\end{alignat}
\end{subequations}
where
$\bld n^f$ is the outward normal direction on the deformed interface $\Gamma_t$
from the fluid domain $\Omega_t^f$,
$\bld n^s$ is the outward normal direction on the initial interface $\Gamma_0$
from the structure domain $\Omega^s$, and
%and 
%$\bld n^s=-\bld n^f_0$, 
%$\bld n^{f/s}$ is the outward unit normal direction on the initial 
%interface $\Gamma_0$ from the respective domain 
the ALE map $\bld \phi_t$ shall satisfy the following boundary conditions 
\begin{alignat}{2}
  \label{fsi-map}
  \bld\phi_t(\mathbf x_0) = &\;\mathbf x_0 + \bld d(\mathbf x_0),&&\quad 
  \forall \mathbf x_0\in\Gamma_0,\quad\quad 
  \bld\phi_t(\mathbf x_0) =\; \mathbf x_0, \quad 
  \forall \mathbf x_0\in\partial\Omega_0^f\backslash\Gamma_0.
\end{alignat}

\subsection{
Semidiscrete scheme: the 
coupling condition treatment}%: spatial discretization}
We first work on the spatial coupling of the two HDG schemes \eqref{hdg-fluid} and
\eqref{hdg-elas} taking into account the interface conditions
\eqref{fsi-ic1}--\eqref{fsi-ic2}.

We define the following quantities 
on the interface $\Gamma_0$:
\[
  \overline{\bld u_h^s}:= \tng(\bld u_h^s)+\nrm(\widetilde{\bld u}_h^s), 
  \quad \overline{\bld d_h}:= \tng(\bld d_h)+\nrm(\widetilde{\bld d}_h), 
  \quad
  \overline{\bld \xi_h}:= \tng(\bld \xi_h)+\nrm(\widetilde{\bld \xi}_h). 
\]
We then define the following coupling term on the deformed interface $\Gamma_t$
\begin{align}
  \label{coupling}
  \mathcal{I}_h\left(
(\bld \epsilon_h^f,
 \widetilde{\sigma}_h^f,
 \widetilde{\bld u}_h^f, 
 \bld u_h^s,
 \widetilde{\bld u}_h^s
 ), 
 (\bld G_h^f,
 \widetilde{\tau}_h^f,
 \widetilde{\bld v}_h^f, 
 \bld \xi_h,
 \widetilde{\bld \xi}_h 
 )\right):= &\;
  \intG{(\overline{\bld u_h^s}\circ\bld \phi_t)\cdot\bld n^f}{\widetilde{\tau}_h^f}
  -\intG{\widetilde{\sigma}_h^f}{(\overline{\bld \xi_h^s}\circ\bld\phi_t)\cdot\bld n^f}\\
       &
       \hspace{-0.3\textwidth}   -2\mu^f\intG{\bld \epsilon_h^f\bld n^f
         }{\tng\left(\widetilde{\bld v}_h^f-
         (\overline{\bld \xi_h^s}\circ\bld\phi_t)\right)
       }
       +2\mu^f\intG{\bld G_h^f\bld n^f}{\tng\left(\widetilde{\bld u}_h^f-
         (\overline{\bld u_h^s}\circ\bld\phi_t)\right)
       }\nonumber\\
       &
       \hspace{-0.05\textwidth}
       +\intG{\alpha^f
         \tng\left(\widetilde{\bld u}_h^f-
         (\overline{\bld u_h^s}\circ\bld\phi_t)
       \right)
         }{
         \tng\left(\widetilde{\bld v}_h^f-
         (\overline{\bld \xi_h^s}\circ\bld\phi_t)
       \right)
       }\nonumber
\end{align}
Note that in the above term, the Nitsche's technique is used to treat the coupling condition 
in the tangential direction, and the mortar method (with $\widetilde{\tau}_h^f$
as the Lagrange multiplier) in the normal direction.

The semidiscrete coupled HDG scheme 
for the FSI model \eqref{ns-eq-reform} and \eqref{elas-eq} with boundary and
interface conditions \eqref{fsi-bc} reads as follows: 
Find 
$
(p_h^f,
 \bld \epsilon_h^f,
 \bld u_h^f,
 \widetilde{\sigma}_h^f,
 \widetilde{\bld u}_h^f)
 \in \Whf{k-1}\times\Shf{k}\times\Vhf{k}\times\tWhf{k}
 \times \tVhf{k}
 $ with $\widetilde{\bld u}_h^f|_{\partial\Omega_t^f\backslash\Gamma_t}=0$,
 and 
$
(\bld P_h,
 \bld F_h,
 \bld d_h,
 \bld u_h^s,
 \widetilde{\bld d}_h,\widetilde{\bld u}_h^s)
 \in \Shs{k}\times\Shs{k}\times\Vhs{k}\times\Vhs{k}
 \times \tVhs{k}\times \tVhs{k}
 $ 
 with 
 $
   \tng({\bld d}_h)|_{\partial\Omega^s\backslash\Gamma_0} =
   %=\tng({\bld u}_h^s)|_{\partial\Omega^s}=0,\quad 
     \nrm(\widetilde{\bld
   d}_h)|_{\partial\Omega^s\backslash\Gamma_0}=0,
   %=\nrm(\widetilde{\bld u}_h^s)|_{\partial\Omega^s}=0,
 $ 
such that 
\begin{align}
  \label{hdg-fsi}
% & \mathcal{A}_h^f\left(
%(p_h^f,
% \bld \epsilon_h^f,
% \bld u_h^f,
% \widetilde{\sigma}_h^f,
% \widetilde{\bld u}_h^f), 
% (q_h^f,
% \bld G_h^f,
% \bld v_h^f,
% \widetilde{\tau}_h^f,
% \widetilde{\bld v}_h^f)
%  \right) 
%  \\\nonumber
% &+
%  \mathcal{A}_h^s\left(
%(\bld P_h,
% \bld F_h,
% \bld d_h,\bld u_h^s,
% \widetilde{\bld d}_h, 
% \widetilde{\bld u}_h^s
% ), 
%(\bld Q_h,
% \bld G_h,
% \bld \xi_h,
% \bld v_h^s,
% \widetilde{\bld \xi}_h,
% \widetilde{\bld v}_h^s)
%  \right)
  \mathcal{M}_h^f(
  \left.\frac{\partial \bld u_h^f}{\partial t}\right|_{\mathbf x_0} 
  , \bld v_h^f) 
  +
  \mathcal{C}_h^f\left(\bld u_h^f-\bld \omega^f; (\bld u_h^f,\widetilde{\bld u}_h^f),
  (\bld v_h^f,\widetilde{\bld v}_h^f)\right)\quad\quad& \\\nonumber
  +\mathcal{A}_h^f\left(
    (p_h^f, \bld \epsilon_h^f, \bld u_h^f,
    \widetilde{\sigma}_h^f, \widetilde{\bld u}_h^f
    ),
  (q_h^f,\bld G_h^f, \bld v_h^f,
    \widetilde{\tau}_h^f, \widetilde{\bld v}_h^f
)\right)&\\\nonumber
  +\mathcal{M}_h^s\left(\frac{\partial \bld u_h^s}{\partial t},
  \bld \xi_h\right) 
  +\mathcal{A}_h^s\left(
    (\bld P_h, \bld F_h, \bld d_h,
    \widetilde{\bld d}_h),
    (\bld Q_h, \bld G_h, \bld \xi_h,
    \widetilde{\bld \xi}_h)\right)
  \\\nonumber
  +
  \mathcal{I}_h\left((
\bld \epsilon_h^f,
 \widetilde{\sigma}_h^f,
 \widetilde{\bld u}_h^f, 
 \bld u_h^s,
 \widetilde{\bld u}_h^s
 ), 
 (\bld G_h^f,
 \widetilde{\tau}_h^f,
 \widetilde{\bld v}_h^f, 
 \bld \xi_h,
 \widetilde{\bld \xi}_h 
 )\right)&
  \;= \mathcal{F}_h^f(\bld v_h^f)+
\mathcal{F}_h^s(\bld \xi_h),
\end{align}
for all $
(q_h^f,
 \bld G_h^f,
 \bld v_h^f,
 \widetilde{\tau}_h^f,
 \widetilde{\bld v}_h^f)
 \in \Whf{k-1}\times\Shf{k}\times\Vhf{k}\times\tWhf{k}
 \times \tVhf{k}
 $ with $\widetilde{\bld v}_h^f|_{\partial\Omega_t^f\backslash\Gamma_t}=0$,
 and 
$
(\bld Q_h,
 \bld G_h,
 \bld \xi_h,
% \bld v_h^s,
 \widetilde{\bld \xi}_h
 %,\widetilde{\bld v}_h^s
 )
 \in \Shs{k}\times\Shs{k}\times\Vhs{k}%\times\Vhs{k}
 \times \tVhs{k}%\times \tVhs{k}
 $ 
 with 
 $
   \tng({\bld \xi}_h)|_{\partial\Omega^s\backslash\Gamma_0} =
     \nrm(\widetilde{\bld
   \xi}_h)|_{\partial\Omega^s\backslash\Gamma_0}=0,
 $
 where the structure displacements and velocities satisfy the relation
 \eqref{vel-disp}.

% The standard Galerkin orthogality (consistency) holds for the  semi-discrete 
% scheme \eqref{hdg-fsi} is consistent as 

%Consistency and stability of the scheme \eqref{hdg-fsi}
% is documented in the following two results.
 Stability of the scheme \eqref{hdg-fsi} is documented in the following result.
 \begin{theorem}
Let 
$
(p_h^f,
 \bld \epsilon_h^f,
 \bld u_h^f,
 \widetilde{\sigma}_h^f,
 \widetilde{\bld u}_h^f)
 \in \Whf{k-1}\times\Shf{k}\times\Vhf{k}\times\tWhf{k}
 \times \tVhf{k}
 $ 
 and 
$
(\bld P_h,
 \bld F_h,
 \bld d_h,
 \bld u_h^s,
 \widetilde{\bld d}_h,\widetilde{\bld u}_h^s)
 \in \Shs{k}\times\Shs{k}\times\Vhs{k}\times\Vhs{k}
 \times \tVhs{k}\times \tVhs{k}
 $ be the solution to the semi-discrete scheme \eqref{hdg-fsi}. 
Then there holds
\begin{align}
  \label{ener-fsi}
& \frac{\partial}{\partial t}
\left(
  \frac12\intV{\rho^f\bld u_h^f}{\bld u_h^f}{f}+
\mathsf{E}_h^s\right)
 +
 2\mu^f\intV{\bld\epsilon_h^f}{\bld\epsilon_h^f}{f} 
 +\intF{\gamma_h
   \left|\tng\left(\bld u_h^f-\widetilde{\bld u}_h^f\right)
       \right|^2
     }{1}{f}
   \\ \nonumber
 & 
 +\intG{\alpha^f
         \left|\tng\left(\widetilde{\bld u}_h^f-
         (\overline{\bld u_h^s}\circ\bld\phi_t)
       \right)\right|^2
         }{1}
     =\mathcal{F}_h^f(\bld u_h^f)
     +\mathcal{F}_h^s(\bld u_h^s).
\end{align}
where 
the elastic energy $\mathsf{E}_h^s$ is given in \eqref{elas-ener}.
 \end{theorem}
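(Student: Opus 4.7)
The plan is to follow the same test-function-equals-trial-function strategy used in the proofs of Theorem \ref{thm:stability} and Theorem \ref{thm:elas}, and then to verify that the coupling term $\mathcal{I}_h$ supplies exactly the boundary contributions on $\Gamma_t$ and $\Gamma_0$ that were missing because the skeleton unknowns no longer vanish on the interface.

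First I would substitute
\[
(q_h^f,\bld G_h^f,\bld v_h^f,\widetilde{\tau}_h^f,\widetilde{\bld v}_h^f)=
(p_h^f,\bld\epsilon_h^f,\bld u_h^f,\widetilde{\sigma}_h^f,\widetilde{\bld u}_h^f),
\qquad
(\bld Q_h,\bld G_h,\bld\xi_h,\widetilde{\bld\xi}_h)=
(\bld P_h,\bld F_h,\bld u_h^s,\widetilde{\bld u}_h^s)
\]
into the coupled equation \eqref{hdg-fsi}. By Theorem \ref{thm:stability}, the fluid block $\mathcal{M}_h^f+\mathcal{C}_h^f+\mathcal{A}_h^f$ produces $\tfrac12\tfrac{d}{dt}\int\rho^f|\bld u_h^f|^2+2\mu^f\int|\bld\epsilon_h^f|^2+\int\gamma_h|\tng(\bld u_h^f-\widetilde{\bld u}_h^f)|^2$, up to extra surface contributions on $\Gamma_t$ that arise from the element-boundary integrals $\intF{\widetilde{\sigma}_h^f}{\bld u_h^f\cdot\bld n}{f}$, $\intF{\widetilde{\bld{Flux}}_{\rm v}}{\tng(\cdots)}{f}$ and $\intF{\widetilde{\bld{Flux}}_{\rm c}}{\cdots}{f}$, because $\widetilde{\bld u}_h^f$ is no longer forced to vanish on $\Gamma_t$. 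Analogously, Theorem \ref{thm:elas} gives the structure energy identity $\tfrac{d}{dt}\mathsf{E}_h^s$ modulo extra surface terms on $\Gamma_0$ coming from $\intFS{\widetilde{\bld P_h}\bld n}{\nrm(\bld u_h^s)}$ (no longer cancelled by the boundary condition).

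The next step is to verify that the coupling form $\mathcal{I}_h$, when tested with the same trial functions, exactly absorbs those leftover surface contributions and leaves only a non-negative quadratic. With the substitution above, the mortar pair in $\mathcal{I}_h$ becomes
\[
\intG{(\overline{\bld u_h^s}\!\circ\!\bld\phi_t)\!\cdot\!\bld n^f}{\widetilde{\sigma}_h^f}
-\intG{\widetilde{\sigma}_h^f}{(\overline{\bld u_h^s}\!\circ\!\bld\phi_t)\!\cdot\!\bld n^f}=0,
\]
and the Nitsche symmetrization pair (with $\bld G_h^f=\bld\epsilon_h^f$) cancels in the same way, so only the penalty $\intG{\alpha^f|\tng(\widetilde{\bld u}_h^f-\overline{\bld u_h^s}\!\circ\!\bld\phi_t)|^2}{1}$ survives. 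What must now be checked is that the remaining fluid surface terms on $\Gamma_t$ add to the remaining structure surface terms on $\Gamma_0$ and cancel against the linear part of $\mathcal{I}_h$. Here I would use (i) the change of variables $\bld\phi_t:\Gamma_0\to\Gamma_t$ together with the Piola-type identity $(\bld P_h\bld n^s)\circ\bld\phi_t^{-1}=-\bld\sigma^f\bld n^f$ implied by definition of $\mathcal{I}_h$ (consistency with \eqref{fsi-ic2}), and (ii) the decomposition $\bld u_h^f\cdot\bld n^f=$ (normal trace) paired against $\widetilde{\sigma}_h^f$, plus $\tng(\bld u_h^f)$ paired against the viscous flux, all on $\Gamma_t$.

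The main obstacle will be bookkeeping on the interface: there are three distinct traces of the fluid velocity (the volume value $\bld u_h^f$, the facet hybrid $\widetilde{\bld u}_h^f$, and the pulled-back structure trace $\overline{\bld u_h^s}\!\circ\!\bld\phi_t$) and three corresponding flux contributions, and one must reorganize them so that (a) the $\widetilde{\bld u}_h^f$ contributions on $\Gamma_t$ telescope together with the Nitsche consistency and symmetry terms in $\mathcal{I}_h$, (b) the structure-side surface integral $\intFS{\widetilde{\bld P_h}\bld n}{\nrm(\bld u_h^s-\widetilde{\bld u}_h^s)}$ transforms via $\bld\phi_t$ into the integral over $\Gamma_t$ that matches the fluid-side $\widetilde{\sigma}_h^f$ term up to the opposite orientation of $\bld n^f$ and $\bld n^s$, and (c) the convective flux contribution on $\Gamma_t$ drops out because $(\bld u_h^f-\bld\omega^f)\cdot\bld n^f=0$ there, which follows from the kinematic condition \eqref{fsi-ic1}--\eqref{fsi-map} enforced by the mortar equation. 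Once these cancellations are established, collecting the surviving positive quadratic terms yields the claimed energy identity \eqref{ener-fsi}.
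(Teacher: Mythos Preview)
Your overall strategy---set test functions equal to trial functions in \eqref{hdg-fsi} and appeal to Theorems~\ref{thm:stability} and~\ref{thm:elas}---is exactly the paper's approach, and your computation that $\mathcal{I}_h$ with test~=~trial collapses to the penalty term $\intG{\alpha^f|\tng(\widetilde{\bld u}_h^f-\overline{\bld u_h^s}\circ\bld\phi_t)|^2}{1}$ is correct and is in fact the entire content of the paper's proof.

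Where you go astray is in steps~(a) and~(b). You expect $\mathcal{A}_h^f$ and the structure block to leave over interface residuals on $\Gamma_t$ and $\Gamma_0$ that then have to be matched against terms in $\mathcal{I}_h$. They do not. If you rerun the algebra of $T_3$ in Theorem~\ref{thm:stability} you will see that the pairs $-\intF{\widetilde\sigma_h^f}{\bld u_h^f\cdot\bld n}{f}+\intF{\bld u_h^f\cdot\bld n}{\widetilde\sigma_h^f}{f}$ and $-2\mu^f\intF{\tng(\bld\epsilon_h^f\bld n)}{\tng(\bld u_h^f-\widetilde{\bld u}_h^f)}{f}+2\mu^f\intF{\tng(\bld u_h^f-\widetilde{\bld u}_h^f)}{\bld\epsilon_h^f\bld n}{f}$ cancel \emph{identically}, facet by facet, with no reference to where $\widetilde{\bld u}_h^f$ vanishes. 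The same is true on the structure side: the pairing $-\intFS{\widetilde{\bld P_h}\bld n}{\nrm(\bld u_h^s-\widetilde{\bld u}_h^s)}+\intFS{\nrm(\bld u_h^s-\widetilde{\bld u}_h^s)}{\bld P_h\bld n}$ already yields $\tfrac12\tfrac{d}{dt}\intFS{\alpha^s|\nrm(\bld d_h-\widetilde{\bld d}_h)|^2}{1}$ regardless of the boundary condition on $\Gamma_0$. In other words, the skew-symmetry is built into the operators, so there is nothing for $\mathcal{I}_h$ to ``telescope'' against; the Nitsche consistency and symmetry terms in $\mathcal{I}_h$ simply cancel one another, as you already observed. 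The bookkeeping you outline in~(a)--(b) would therefore terminate trivially, but it is superfluous.

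Your point~(c), on the other hand, is genuine and is in fact something the paper's short proof sweeps under the rug. The derivation of $T_2$ in Theorem~\ref{thm:stability} uses $\bld u_h^f\in H_0(\mathrm{div};\Omega_t^f)$; in the coupled problem the normal trace of $\bld u_h^f$ is nonzero on $\Gamma_t$, and a direct recomputation leaves the extra terms $\tfrac12\langle\rho^f(\bld u_h^f-\bld\omega^f)\cdot\bld n^f,(\bld u_h^f\cdot\bld n^f)^2\rangle_{\Gamma_t}$ and $-\tfrac12\langle\rho^f(\bld u_h^f-\bld\omega^f)\cdot\bld n^f,|\tng(\widetilde{\bld u}_h^f)|^2\rangle_{\Gamma_t}$. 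Both vanish precisely when $(\bld u_h^f-\bld\omega^f)\cdot\bld n^f=0$ on $\Gamma_t$, i.e.\ when the discrete mesh velocity on the interface coincides (in the normal component) with the discrete fluid velocity there---which is the compatibility between \eqref{fsi-map} and the mortar constraint that you correctly flag. So your plan is basically right; just drop the spurious steps~(a) and~(b) and keep~(c).
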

\begin{proof}
  Taking test functions in the coupling term \eqref{coupling} to be the trial
  functions, we easily get
  \[
  \mathcal{C}_h\left(
(\bld \epsilon_h^f,
 \widetilde{\sigma}_h^f,
 \widetilde{\bld u}_h^f, 
 \bld u_h^s,
 \widetilde{\bld u}_h^s
 ), 
(\bld \epsilon_h^f,
 \widetilde{\sigma}_h^f,
 \widetilde{\bld u}_h^f, 
 \bld u_h^s,
 \widetilde{\bld u}_h^s
 ) 
 \right):= 
\intG{\alpha^f
        \left| \tng\left(\widetilde{\bld u}_h^f-
         (\overline{\bld u_h^s}\circ\bld\phi_t)
       \right)\right|^2
         }{
        1 
       }.
  \]
  The energy identity \eqref{ener-fsi} is simply obtained by 
  combining the above identity 
  with the stability results in Theorem \ref{thm:stability} and Theorem 
  \ref{thm:elas}.
\end{proof}

\subsection{Fully discrete scheme: the moving mesh algorithm}
Following the previous two sections, we consider a BDF2 temporal discretization.
The fully discrete scheme is obtained from the equations \eqref{hdg-fsi}
by replacing the term $
  \left.\frac{\partial \bld u_h^f}{\partial t}\right|_{\mathbf x_0}
$ by $D_t^2\bld u_h^{f,n}$ in \eqref{ale-bdf2},  and the term 
$
\frac{\partial \bld u_h^s}{\partial t}
$
by $\mathsf{D}_t^2\bld u_h^{s,n}$ in \eqref{bdf2-s1}, 
and evaluating the other terms at time level $t^n$, 
where the structure displacements are given explicitly by the formulas
\eqref{disp-X}. We skip the complete formulation.

The last ingredient for an implementable scheme is the construction 
of the (unknown) ALE map \eqref{ale}, which shall satisfy the boundary
condition \eqref{fsi-map}, and, ideally,  can be used to handle large mesh deformations.
%and ensure a good mesh quality over time. 
Various moving mesh algorithms exist in the literature \cite{Wick11,Shamanskiy20}.
Here we adopt the %tangential incremental 
nonlinear elasticity model
with a mesh-Jacobian-based stiffening proposed in \cite{Shamanskiy20}.
In particular, we compute the ALE map $\bld \phi_{h}^n
\in \bld S_h^k$ 
at time $t^n$ 
such that it satisfies the 
boundary conditions
\begin{subequations}
  \label{ale-maps}
  \begin{alignat}{2}
  \label{ale-bc}
  \bld\phi_h^n(\mathbf x_0) = &\;\mathbf x_0 +\overline{ \bld d_h^n}(\mathbf x_0),&&\quad 
  \forall \mathbf x_0\in\Gamma_0,\quad\quad 
  \bld\phi_h^n(\mathbf x_0) =\; \mathbf x_0, \quad 
  \forall \mathbf x_0\in\partial\Omega_0^f\backslash\Gamma_0,
\end{alignat}
and 
\begin{align}
  \label{ale-map}
  \intVX{\frac{1}{J_K^0}\bld P_a}{\nabla_{\mathbf x_0}\bld \psi_h} = 0,
\quad \forall 
\bld \psi_h\in \bld S_h^k \text{ with } \bld \psi_h|_{\partial
\Omega_0^f}=0,
\end{align}
\end{subequations}
%for all $\bld \psi_h\in \bld S_h^k$ such that $\bld \psi_h|_{\partial
%\Omega_0^f}=0$,
where $J_K^0$ is the Jacobian determinant for the initial configuration
given in \eqref{jac}, and 
%where, 
following \cite{Shamanskiy20}, we
use the
logarithmic variation of the neo-Hookean material law
\[
  \bld P_a:= \bld F_a(
  \lambda_a\mathrm{ln}J_a\, \bld C_a^{-1}
  +\mu_a(\mathbf I-\bld C_a^{-1})
  )
\] 
where 
$
  \bld F_a=\nabla_{\mathbf x_0}\bld \phi_h, \;\;J_a=\mathrm{det}(\bld F_a),\;\; \bld C_a = \bld F_a^T\bld F_a.
$
We take the artificial  Lam\'e parameters $\mu_a=1$ and $\lambda_a = 1.5$
so that the Poisson ration $\nu_a=\frac{\lambda_a}{2(\lambda_a+\mu_a)}=0.3$. 
After the discrete ALE maps  $\bld \phi_h^n$ are computed, we again use BDF2 to 
approximate the 
%define
the mesh velocity: 
%using BDF2:
\begin{align}
  \label{mesh-vel}
  \bld \omega^{f,n}_h := 
  \frac{1.5\bld \phi_h^{n}-2\bld \phi_h^{n-1}+0.5\bld \phi_h^{n-2}}{\Delta
  t}.
\end{align}

To simplify the calculation, we further decouple the computation of the ALE map \eqref{ale-maps} with the
HDG solver \eqref{hdg-fsi} for the field unknowns
by extrapolating the boundary data \eqref{ale-bc}. Hence, the geometric nonlinearity
is treated explicitly.
Moreover, 
following \cite{Shamanskiy20},
we solve the nonlinear equations \eqref{ale-maps} approximately by 
only solving a linearized problem of \eqref{ale-maps} 
%with linearization 
around the previous state
$\bld \phi_h^{n-1}$. 
%perform {\it one
%step} of Newton iteration with 
The algorithm to advance one time step is sketch below.

\textbf{Algorithm 1: } 
{\sf
  Given the ALE map $\bld \phi_h^j$ and the quantities $\bld u_h^{f,j}, \bld u_h^{s,j}, \bld d_h^{j}, 
  \widetilde{\bld d}_h^{j}$ 
  for $j=n-2$ and $n-1$,
  find the solution 
  at time $t^n$ by the following steps:
%set the  displacement extrapolation 
%$
% \overline{ \bld d_h^{n}}:= 2\overline{ \bld d_h^{n-1}}
% -\overline{ \bld d_h^{n-2}}.
%$
%Do until convergence
%The complete flow chart to advance solution from time level $t^{n-2}$ and
%$t^{n-1}$  to time level $t^n$ goes as follows:
%Denote the %(second-order) 
%displacement extrapolation 
%$
% \overline{ \bld d_h^{n,0}}:= 2\overline{ \bld d_h^{n-1}}
% -\overline{ \bld d_h^{n-2}}
%$
\begin{itemize}
  \item[(1)] 
    Replace the (implicit) boundary data 
$
 \overline{ \bld d_h^{n}}$ 
    in \eqref{ale-bc} by 
the second-order extrapolation $
 2\overline{ \bld d_h^{n-1}}
 -\overline{ \bld d_h^{n-2}}.
 $ 
 Then %, to save some computational cost, we follow \cite{Shamanskiy20} to 
 solve a linearized version of the 
 eqauations \eqref{ale-maps} to obtain $\bld \phi_h^n$.
In particular, 
we linearize the equations \eqref{ale-maps} around the previous
 state $\bld \phi_h^{n-1}$.
 %and set the new ALE map $\bld \phi_h^n$ as the solution to this linearized problem. 
    %approximately by linearize
    %a linearized version of the equations \eqref{ale-maps}
    %by 
    %linearize the problem around the previous configuration $\bld \phi_h^{n-1}$.
    %Set the 
  \item[(2)] 
Deform the fluid mesh $\Th^{f,t}= \bld\phi_h^n(\Th^{f,0})$, and 
set the mesh velocity $\bld \omega_h^{f,n}$ using equation \eqref{mesh-vel}.
Solve for the velocities
$\bld u_h^{f,n}, \bld u_h^{s,n},\widetilde{ \bld u}_h^{s,n}$ 
using the HDG scheme \eqref{hdg-fsi} coupled with the BDF2 time stepping.
Then update displacements $\bld d_h^{n},\widetilde{ \bld d}_h^{n}$ 
using equations \eqref{disp-X}.
\end{itemize}
}
The major computational cost of the above algorithm lies in the HDG solver in
step 2, which takes more than 90\% of the overall run time.

\section{Numerical results}
\label{sec:num}
We test the performance of 
\textbf{Algorithm 1} on the classical benchmark problem proposed by 
Turek and Hron \cite{Turek06} where reference data are available in \cite{FE}.
Our numerical simulations are performed using the open-source finite-element software 
{\sf NGSolve} \cite{Schoberl16}, \url{https://ngsolve.org/}.

%The nonlinear system in step 2 of \textbf{Algorithm 1} is solved using the
%Newton's method with a stopping criterion . 
%The linearized problems are solved via the sparse direct solver
%{\sf pardiso}.
\subsection{Problem setup}
The problem is a two-dimensional incompressbile channel flow around a rigid
cylinder with an attached nonlinearly elastic bar.
The domain is depicted in Figure~\ref{fig:domain}.
\begin{itemize}
  \item The domain dimensions are: length $L=2.5$, height $H=0.41$.
  \item The circle center is positioned at $C=(0.2,0.2)$ (measured from the left bottom
corner of the channel) and the radius is $r=0.05$.
\item The elastic structure bar has length $l=0.35$ and height $h=0.02$, the right
bottom corner is positioned at $(0.6,0.19)$, and the left end is fully attached to the fixed cylinder.
\item The control point is $A(t)$, fixed with the structure with $A(0)=(0.6,0.2)$.
\end{itemize}
\begin{figure}[h!]
\centering
\includegraphics[width=.45\textwidth]{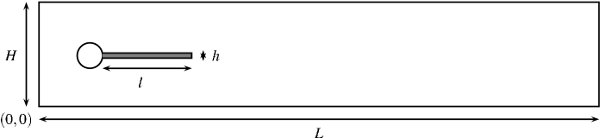}\quad\quad\quad
\includegraphics[width=.45\textwidth]{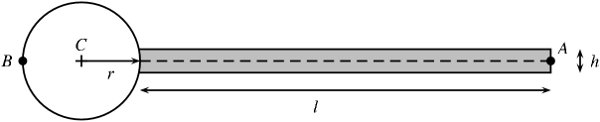}
\caption{The domain for the FSI problem \cite{Turek06}.}
\label{fig:domain}
\end{figure} 
The fluid region is governed by the Navier-Stokes equations
\eqref{ns-eq-reform} with $\bld f^f=0$, and the 
elastic structure is governed by the equations for hyperelasticity \eqref{elas-eq}
with $\bld f^s=0$.
The %standard 
coupling conditions \eqref{fsi-ic1}--\eqref{fsi-ic2} is used on the
fluid-structure interface $\Gamma_t$, and the following boundary conditions are used:
\begin{itemize}
  \item  
A parabolic velocity profile is prescribed at the left channel inflow
\[
  \bld u^f (0,y, t) =\left\{
    \begin{tabular}{ll}
   $ \bld u^f(0, y)\frac{1-\cos(\frac{\pi}{2}t)}{2} $
   & if $t<2$,\\[2ex]
   $ \bld u^f(0, y) $
   & otherwise,
    \end{tabular}
\right.
\]
where $\bld u^f(0,y)= 1.5\bar U \frac{y(H-y)}{({H}/{2})^2}=
61.5\bar U \frac{4.0}{0.1681}{y(0.41-y)}
$. 
%such that the mean inflow velocity is $\bar $ 
\item 
  The stress-free boundary condition $\bld\sigma^f\bld n=0$
is prescribed at the outflow.
\item The no-slip condition 
  ($\bld u^f=0$, or $\bld d=\bld u^s=0$)
  is prescribed on all the other boundary
  parts. %i.e. top and bottom wall, circle and fluid-structure interface ￼.
\end{itemize}

Two test cases resulting in time periodic solutions are considered, which are 
denoted as FSI2 and FSI3 in \cite{Turek06}.
The associated material parameters are listed in Table~\ref{table:mat}.
\begin{table}[ht!]
\begin{center}
  \scalebox{1}
  {
  \begin{tabular}{ c | c | c|c|c|c|c } %\hline
    parameter &
    $\rho^s[10^3\mathrm{\frac{kg}{m^3}}]$
              &$\lambda^s[10^6\mathrm{\frac{kg}{ms^2}}]$ & $\mu^s[10^6\mathrm{\frac{kg}{ms^2}}]$
              &
    $\rho^f[10^3\mathrm{\frac{kg}{m^3}}]$
              & $\mu^f[\mathrm{\frac{kg}{ms}}]$
              &$\bar{U}[\mathrm{\frac{m}{s}}]$
  \\\hline
    FSI2 & 10 & 2.0 & 0.5 & 1 &1 & 1\\
    FSI3 & 1 & 8.0 & 2.0 & 1 &1 & 2\\[.1ex]
    %\\
%\hline 
\end{tabular}
}
\end{center}
  \caption{\it Parameter settings for the two test cases.}
\label{table:mat}
\end{table}

Quantities of interest are
\begin{itemize}
  \item The displacement of the control point $A$ at the end of the beam
    structure (see Figure~\ref{fig:domain}).
  \item The lift and drag forces acting on the cylinder and the beam structure:
    \[
      (F_D,F_L) = \int_S\bld \sigma^f\bld n\,\mathrm{ds},
    \] 
    where $S$ denotes the boundary between the fluid domain and the cylinder together with the elastic
structure. 
\end{itemize}
We compare the computational results with the reference data provided in 
\cite{FE}
after a fully developed periodic flow is formed.
%for {\it fully developed flow}.
%and particularly for {\it one full period of the oscillation} with respect to the position of the point $A(t)$. 

\subsection{Discretization}
We apply \textbf{Algorithm 1} to both test cases, and take the polynomial degree
$k=3$ throughout. 
Three set of meshes  are used in the simulation.
The coarse mesh, which contains $495$ triangular elements,  is generated automatically from the geometry by
NETGEN \cite{Schoberl99}. Curved elements with polynomial degree $k=3$ are used
for elements near the cylinder.
The intermediate and fine meshes are obtained from the coarse mesh by uniform
refinements. See Figure~\ref{fig:mesh} for the coarse and intermediate meshes.
\begin{figure}[h!]
\centering
\includegraphics[width=.45\textwidth]{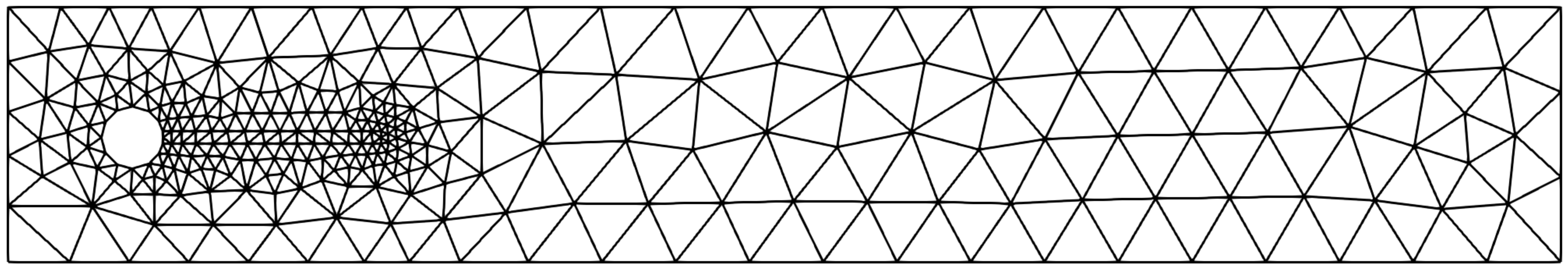}
\includegraphics[width=.45\textwidth]{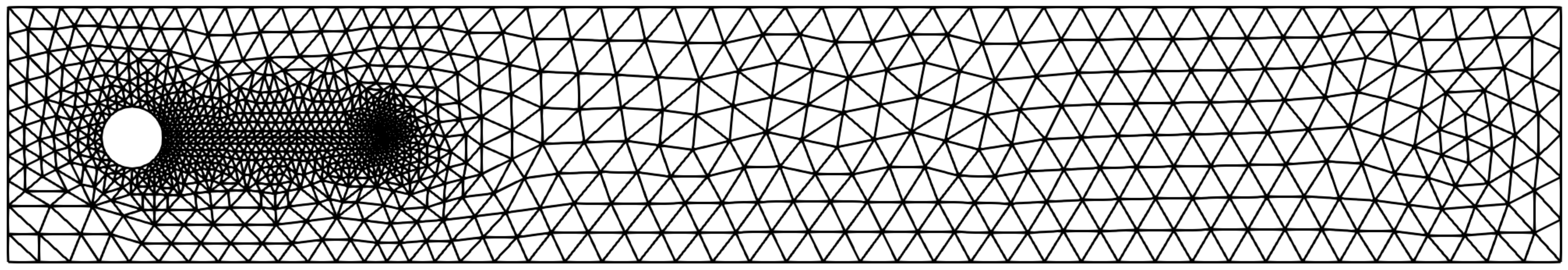}
\caption{The computational meshes. Left: coarse mesh with 495 elements. 
  Right:
intermediate mesh with 1980 elements.
Curved elements with polynomial degree $3$ are used near the cylinder.
%Right: fine mesh with 7920 elements.
}
\label{fig:mesh}
\end{figure}

The nonlinear system in \eqref{hdg-fsi} is solved via the Newton's method where
the iteration is terminated 
with an absolute tolerance of $10^{-8}$ for the $l_2$-norm of the residual
vector.
Static condensation is used for the resulting linear system problems, where 
the globally coupled degrees of freedom consists of {\it only} those lie on the
mesh skeleton, namely, those for the normal-normal component of the fluid stress 
$\widetilde{\sigma}_h^f$, the tangential component of the fluid velocity
$\widetilde{\bld u}_h^f$, the normal component of the structure velocity 
$\widetilde{\bld u}_h^s$, and the structure velocity $\bld u_h^{s}$ on the mesh skeleton.
The globally coupled linear system is then solved via a sparse direct solver.
We observe in average 4-6 iterations for the nonlinear solver to converge. 
The number of globally coupled degrees of freedom %for the HDG scheme (with $k=3$)
is $6188$ on the coarse mesh, $24256$ on the intermediate mesh, and 
$96032$ on the fine mesh.

For the FSI2 problem, we 
use three different time step size, namely, $\Delta t = 0.02, 0.01, 0.005$, and 
stop the simulation at time $T=15$.
A fully developed periodic flow is observed starting around 
time $t=10$.
%; see Figure~\ref{fig:flow2} for the quantities of interest 
%computed  on the intermediate mesh with $\Delta t = 0.005$. 
%\begin{figure}[h!]
%\centering
%\includegraphics[width=\textwidth]{fsi2}
%\caption{
%  Time evolution of the 
%  quantities of interest for FSI2.
%Discretization: \textbf{Algorithm 1} on the
%intermediate mesh with $\Delta t = 0.005$.
%  Top left: $x$-component of displacement at point $A$; 
%  Top right: $y$-component of displacement at point $A$;
%Bottom left: lift force; 
%Bottom right: drag force.
%}
%\label{fig:flow2}
%\end{figure} 
In Figure~\ref{fig:fsi2}, we plot the quantities of interest for the fully
developed flow over one second time for \textbf{Algorithm 1} on the 
three  meshes, % with different time step sizes, 
along
with reference data provided in
\cite{FE}. 
%The results on the other two meshes are very similar to those in
%Figure~\ref{fig:fsi2}, hence is omitted.
Two comments are in order.
\begin{itemize}
  \item [(1)]
    On a fixed mesh, 
    we observe a better phase and amplitude accuracy, compared with the
    reference data, when decreasing the time step size from  $\Delta
    t = 0.02$ (in red) 
    to $\Delta
    t = 0.01$ (in green).
    However,  further decreasing the time step size to $\Delta t=0.005$ (in
    blue) does not lead to significantly better results.
  \item [(2)]
    Comparing the results on different meshes when the time step size is fixed,
  we find  the all results are very similar to each other. 
    This indicates that our spatial discretization is very accurate even on the coarse mesh.
\end{itemize}
%We observe that the 
%the results with the coarse time step size
%$\Delta t = 0.02$ leads to slightly more numerical dissipation and phase shift, while 
%the results with $\Delta t=0.01$ and $\Delta t=0.005$ are very close to each
%other and also agrees well with
%the reference data.

\begin{figure}[h!]
\centering
\includegraphics[width=\textwidth]{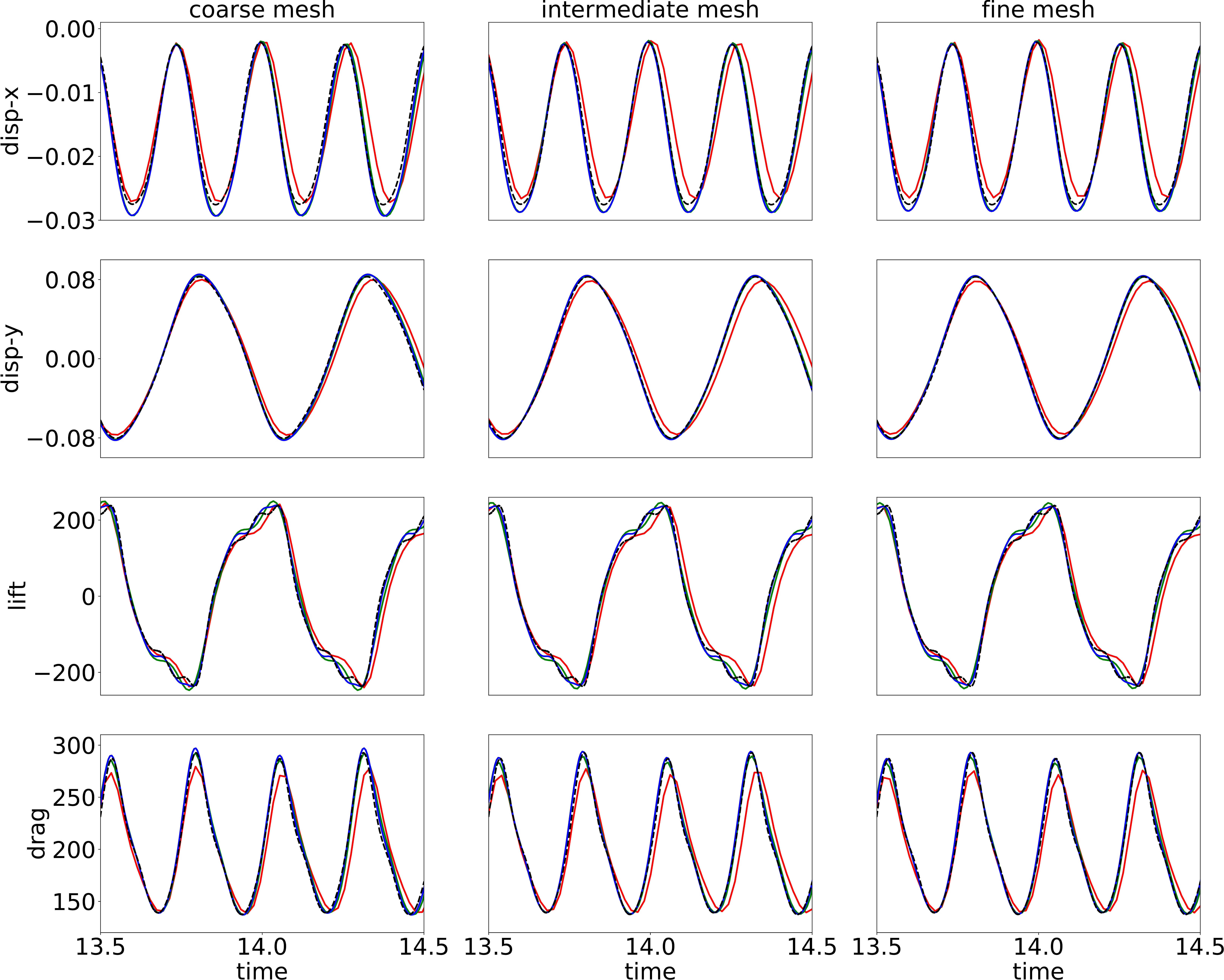}
\caption{
 The quantities of interest for FSI2 in the fully developed flow regime. 
 Left column: coarse mesh. Middle column: intermediate mesh. Right column: fine
 mesh.  
First row: $x$-component of displacement at point $A$.
Second row : $y$-component of displacement at point $A$.
Third row: lift force.
Last row: drag force.
 Red line: $\Delta t = 0.02$;
Green line: $\Delta t = 0.01$;
Blue line: $\Delta t = 0.005$;
Dashed black line: reference solution from \cite{FE}.
}
\label{fig:fsi2}
\end{figure}

For the FSI3 problem, we 
use three different time step size, namely, $\Delta t = 0.01, 0.005, 0.0025$, and 
stop the simulation at time $T=9$.
A fully developed periodic flow is observed starting around 
time $t=5$.
In Figure~\ref{fig:fsi3}, we plot the computed quantities of interest for the fully
developed flow over half second time.
%for our method on the 
%intermediate mesh with different time step size, along
%with reference data provided in
%\cite{FE}.
Overall, all the results are in very good agreements with
the reference data. We make two more comments.
\begin{itemize}
  \item [(1)]
    Similar to the FSI2 case, 
    on a fixed mesh, 
    we observe a better phase and amplitude accuracy, compared with the
    reference data, when decreasing the time step size from  $\Delta
    t = 0.01$ (in red) 
    to $\Delta
    t = 0.005$ (in green).
    However,  further decreasing the time step size to $\Delta t=0.0025$ (in
    blue) does not lead to better results.
  \item [(2)]
    Comparing the results on different meshes when the time step size is fixed
    at  $\Delta t = 0.005$ (in green) or 
  $\Delta t = 0.0025$ (in blue), 
  we find  the results in the displacements and lift force are very similar on the
  intermediate and fine meshes, which are slightly better than those on the
  coarse mesh.
  While the results in the drag force on the fine mesh is still the most
  accurate,  those on the coarse mesh  is surprisingly better than 
  those on the intermediate mesh, which slightly
  underestimate the magnitude by about $1\%$.
\end{itemize}

\begin{figure}[h!]
\centering
\includegraphics[width=\textwidth]{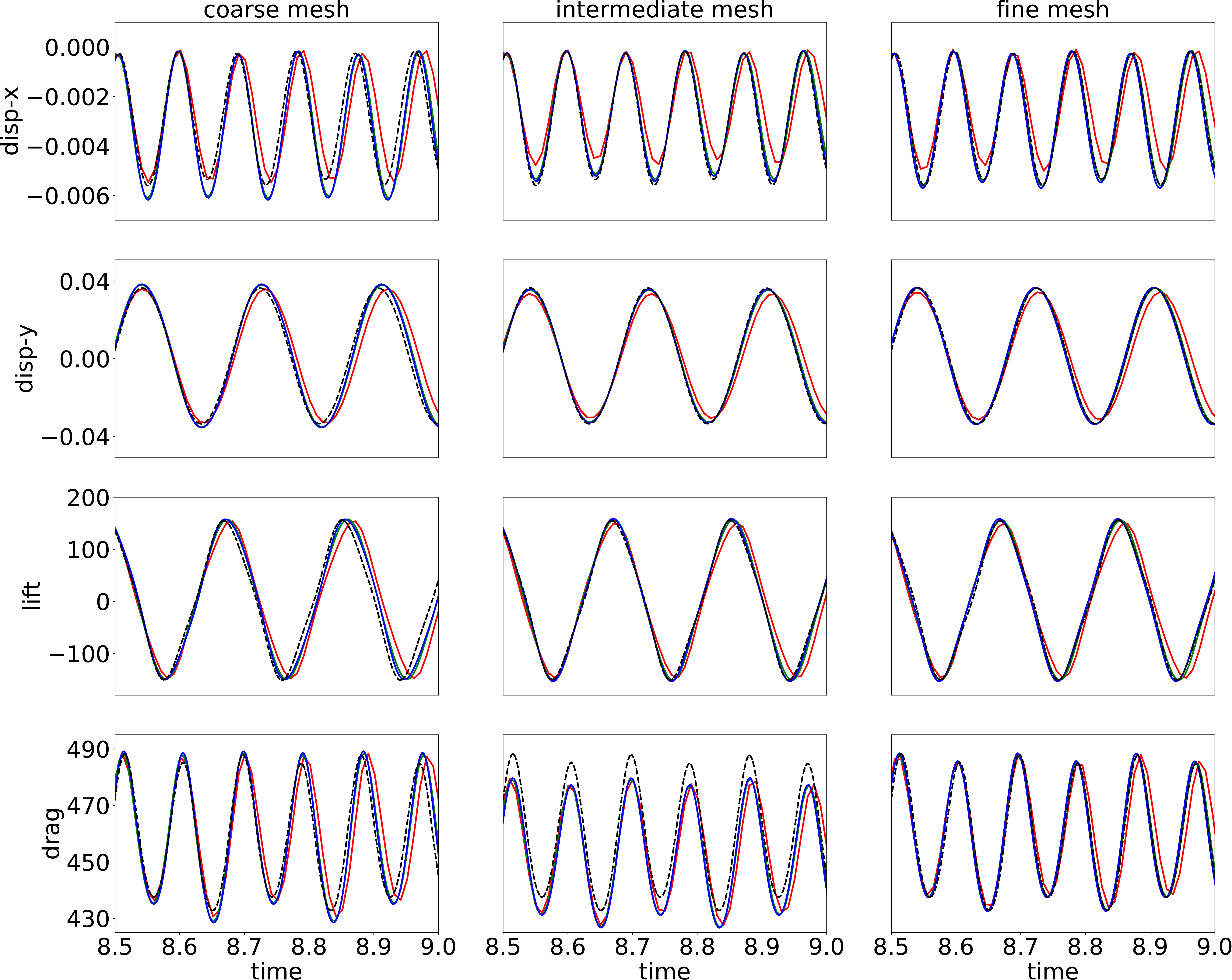}
\caption{
 The quantities of interest for FSI3 in the fully developed flow regime. 
 Left column: coarse mesh. Middle column: intermediate mesh. Right column: fine
 mesh.  
First row: $x$-component of displacement at point $A$.
Second row : $y$-component of displacement at point $A$.
Third row: lift force.
Last row: drag force.
 Red line: $\Delta t = 0.02$;
Green line: $\Delta t = 0.01$;
Blue line: $\Delta t = 0.005$;
Dashed black line: reference solution from \cite{FE}.
}
\label{fig:fsi3}
\end{figure} 

\section{Conclusion}
\label{sec:conclude}
We have presented a novel coupled HDG scheme for the FSI problem, where the
moving domain fluid Navier-Stokes equations are solved using an
ALE-divergence-free HDG scheme, and the equations for nonlinear elasticity is
solved in the Lagrangian framework using a novel $H(\mathrm{curl})$-conforming
HDG scheme.
Numerical results on the benchmark problem of Turek and Hron \cite{Turek06}
showed good performance of our method.
Our ongoing work consists of 
(1) the construction of good preconditioners for the linearized problems,
and  (2) the construction of efficient, robust and accurate partitioned
algorithms for this HDG scheme.

%\bibliography{fsi}
\bibliographystyle{siam}

\end{document}